\definecolor{cite}{rgb}{0.00,0.60,1.00}
\definecolor{url}{rgb}{1.00,0.10,0.80}
\definecolor{link}{rgb}{0.00,0.00,1.00}
\newtheorem{theorem}[subsection]{Theorem}
\newtheorem{proposition}[subsection]{Proposition}
\newtheorem{corollary}[subsection]{Corollary}
\newtheorem{lemma}[subsection]{Lemma}
\theoremstyle{definition}
\newtheorem{definition}[subsection]{Definition}
\newtheorem{remark}[subsection]{Remark}
\numberwithin{equation}{subsection}
\newcommand{\blambda}{\boldsymbol{\lambda}}
\newcommand{\bk}{\mathbf{k}}
\newcommand{\bm}{\mathbf{m}}
\newcommand{\bx}{\mathbf{x}}
\newcommand{\bN}{\mathbf{N}}
\newcommand{\bR}{\mathbf{R}}
\newcommand{\bT}{\mathbf{T}}
\newcommand{\bZ}{\mathbf{Z}}
\begin{document}

\title{An Effective Deligne's equidistribution theorem}

\author{Lei Fu}
\address{Yau Mathematical Sciences Center, Tsinghua University}
\email{leifu@tsinghua.edu.cn}

\author{Yuk-Kam Lau}
\address{
Weihai Institute for Interdisciplinary Research, Shandong University, China \mbox{\rm and}
Department of Mathematics, The University of Hong Kong, Pokfulam Road, Hong Kong}
\email{yklau@maths.hku.hk}

\author{Ping Xi}
\address{School of Mathematics and Statistics, Xi'an Jiaotong University, Xi'an 710049, P. R. China}
\email{ping.xi@xjtu.edu.cn}

\subjclass[2020]{11K38, 22E46, 14F20}

\keywords{Deligne's equidistribution theorem, Erd\H{o}s--Tur\'an inequality, Weyl integration formula, Weyl character formula.}

\begin{abstract} We prove an Erd\H{o}s--Tur\'an type inequality for compact Lie groups, from which we deduce
an effective version of Deligne's equidistribution theorem.
\end{abstract}
\maketitle

\section{Introduction}

Let $\mathbf F_q$ be a finite field of characteristic $p$ with $q$ elements, $X$ a smooth geometrically connected 
scheme over $\mathbf F_q$,  $\eta$ the generic point of $X$, 
$\ell$ a prime number distinct from $p$, and
$\mathcal F$ a lisse $\overline{\mathbf Q}_\ell$-sheaf on $X$. 
We have an exact sequence of fundamental groups 
$$1\to \pi_1(X\otimes_{\mathbf F_q}\overline{\mathbf F}_q, \bar \eta)\to \pi_1(X, \bar \eta)
\to\mathrm{Gal}(\overline{\mathbf F}_q/\mathbf F_q)\to 1.$$ 
The lisse $\overline{\mathbf Q}_\ell$-sheaf $\mathcal F$ on $X$ defines a representation
$$\rho_{\mathcal F}: \pi_1(X, \bar \eta)\to \mathrm{GL}(\mathcal F_{\bar \eta}).$$ 
The \emph{geometric monodromy group} (resp. \emph{arithmetic monodromy group}) of $\mathcal F$
is defined to be the Zariski closure of the image of $\pi_1(X\otimes_{\mathbf F_q}\overline{\mathbf F}_q, \bar \eta)$  
(resp. $\pi_1(X, \bar \eta))$ in $\mathrm{GL}(\mathcal F_{\bar \eta})$. 

Fix an isomorphism $\iota: \overline{\mathbf Q}_\ell\stackrel\cong\to \mathbf C$. Throughout this paper, we assume $\mathcal F$ is punctually 
$\iota$-pure of weight $0$, that is, for any closed point $x$ in $X$ and any eigenvalue $\lambda$ of the action of the geometric Frobenius element 
$F_x$ on $\mathcal F_{\bar x}$, we have $|\iota(\lambda)|=1$. Then by theorems of Deligne and Grothendieck 
\cite[3.4.1(iii), 1.3.9]{De80}, the identity component $G^0$ of the geometric monodromy group $G$ is semisimple. Via the isomorphism 
$\iota$, the algebraic group $G$ defines a complex semisimple Lie group $G_{\mathbf C}$.  
The category of finite dimensional 
$\overline{\mathbf Q}_\ell$-representations of $G$ is equivalent to 
that of finite dimensional complex representations of $G_{\mathbf C}$.

We assume $G$ is connected throughout this paper.  Let 
$\mathfrak g$ be the Lie algebra of $G_{\mathbf C}$. Fix a Cartan subalgebra $\mathfrak h$ of $\mathfrak g$, and let 
$R$ be the root system. Choose an ordering $R=R^+\cup R^-$ on $R$. 
For each ${\boldsymbol\alpha}\in R$, let $H_{\boldsymbol\alpha}\in \mathfrak h$ be the element in 
$[\mathfrak g_{\boldsymbol\alpha}, \mathfrak g_{-{\boldsymbol\alpha}}]$ satisfying 
${\boldsymbol\alpha}(H_{\boldsymbol\alpha})=2$, and let $\mathfrak h_{\mathbf R}$ be the real subspace of $\mathfrak h$ spanned by 
$H_{\boldsymbol\alpha}$ $(\boldsymbol\alpha\in R)$. There exists a real 
subalgebra $\mathfrak g_{\mathbf R}$ of $\mathfrak g$ such that 
$$\mathfrak g_{\mathbf R}=i \mathfrak h_{\mathbf R}
\bigoplus \bigoplus_{{\boldsymbol\alpha}\in R^+} 
\Big(\mathfrak g_{\mathbf R}\cap (\mathfrak g_{\boldsymbol\alpha}\oplus\mathfrak g_{-{\boldsymbol\alpha}})\Big),$$ 
that
$\mathfrak g_{\mathbf R}$ is the Lie algebra of a maximal compact subgroup ${G_{\mathbf R}}$ of $G$, and that $i \mathfrak h_{\mathbf R}$ 
is the Lie algebra of a maximal torus $T$ of ${G_{\mathbf R}}$. The category of finite dimensional 
complex representations of $G_{\mathbf C}$ is equivalent to 
that of finite dimensional complex representations of $G_{\mathbf R}$.  

For simplicity, we assume $\rho_{\mathcal F}(\pi_1(X, \bar \eta))\subset G$ as in \cite[3.3]{Ka88}, that is, the arithmetic monodromy 
group coincides with the geometric monodromy group. 
Then for every closed point $x$ in $X$, the geometric Frobenius conjugacy class $F_x$ 
in $\pi_1(U, \bar \eta)^\natural$ defines an element in $G^\natural$ and an element in $G^\natural_{\mathbf C}$, where 
the set of conjugacy classes of a group $H$ is denoted by $H^\natural$. Since 
we assume $\mathcal F$ is punctually $\iota$-pure of weight $0$, the semisimple part of $\rho_{\mathcal F}(F_x)$ 
defines an element in $G_{\mathbf R}^\natural$ 
which we denote by $\rho_{\mathcal F}(F_x)^{\mathrm{ss}}$. Provide $G_{\mathbf R}^\natural$ with the quotient topology induced 
from $G_{\mathbf R}$, and let $\mu_{G_{\mathbf R}^\natural}$ be the measure 
on $G_{\mathbf R}^\natural$ induced by the Haar measure $\mu_{G_{\mathbf R}}$ on $G_{\mathbf R}$. For any integrable function $f$ on $G_{\mathbf R}^\natural$, we have 
$$\int_{G_{\mathbf R}^\natural} f \,\mathrm d\mu_{G_{\mathbf R}^\natural}=
\int_{G_{\mathbf R}} f \,\mathrm d\mu_{G_{\mathbf R}}.$$  
Deligne proves the following theorem (\cite[3.5.3]{De80}, \cite[3.6]{Ka88}).

\begin{theorem}[Deligne] Let $\mathcal F$ be a lisse $\overline{\mathbf Q}_\ell$-sheaf on $X$ punctually 
$\iota$-pure of weight $0$, and let $G$ be 
its geometric monodromy group. Suppose the arithmetic monodromy group of $\mathcal F$ coincides with $G$. 
For any positive integer $m$, let $X(\mathbf F_{q^m})$ be the set of $\mathbf F_{q^m}$-points in $X$.
For any domain $D$ in $G_{\mathbf R}^\natural$ so that the boundary of $D$ has measure $0$, we have
$$\lim_{m\to\infty}\frac{|\{x\in X(\mathbf F_{q^m}):\rho_{\mathcal F}(F_x)^{\mathrm{ss}}\in D \}|}{|X(\mathbf F_{q^m})|}=
\mu_{G_{\mathbf R}^\natural}(D).$$
\end{theorem}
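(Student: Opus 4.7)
The plan is to reduce the statement to the density of irreducible characters among continuous class functions and then bound the resulting character sums via the Grothendieck--Lefschetz trace formula and Deligne's Weil~II.

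First, since $\partial D$ has $\mu_{G_{\mathbf R}^\natural}$-measure zero, a standard approximation reduces the claim to showing that the empirical measures
$$\mu_m:=\frac{1}{|X(\mathbf F_{q^m})|}\sum_{x\in X(\mathbf F_{q^m})}\delta_{\rho_{\mathcal F}(F_x)^{\mathrm{ss}}}$$
on $G_{\mathbf R}^\natural$ converge weakly to $\mu_{G_{\mathbf R}^\natural}$. By the Peter--Weyl theorem together with Stone--Weierstrass, the irreducible characters of $G_{\mathbf R}$ form a uniformly dense subset of continuous class functions, so it suffices to verify, for every irreducible character $\chi$,
$$\lim_{m\to\infty}\int_{G_{\mathbf R}^\natural}\chi\,\mathrm d\mu_m=\int_{G_{\mathbf R}^\natural}\chi\,\mathrm d\mu_{G_{\mathbf R}^\natural},$$
and by orthogonality the right-hand side equals $1$ when $\chi$ is trivial and $0$ otherwise. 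The trivial case is tautological.

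For a nontrivial irreducible character $\chi$ corresponding to a representation $V$ of $G_{\mathbf R}$, I use the chain of equivalences of categories recalled in the introduction to transport $V$ to a representation of $G$, and then compose with $\rho_{\mathcal F}$ to produce a lisse sheaf $\mathcal F_\chi$ on $X$, still punctually $\iota$-pure of weight $0$. Since traces depend only on semisimple parts,
$$\int_{G_{\mathbf R}^\natural}\chi\,\mathrm d\mu_m=\frac{1}{|X(\mathbf F_{q^m})|}\sum_{x\in X(\mathbf F_{q^m})}\mathrm{tr}\bigl(F_x\mid \mathcal F_{\chi,\bar x}\bigr).$$
Applying the Grothendieck--Lefschetz trace formula rewrites the numerator as an alternating sum of $\mathrm{tr}(F^m\mid H^i_c(X\otimes_{\mathbf F_q}\overline{\mathbf F}_q,\mathcal F_\chi))$ for $0\leq i\leq 2d$, where $d=\dim X$. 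By Deligne's Weil~II, each $H^i_c$ is $\iota$-mixed of weight $\leq i$, while the top group $H^{2d}_c$ is isomorphic, via Poincar\'e duality, to the geometric coinvariants of $\mathcal F_{\chi,\bar\eta}$ Tate-twisted by $-d$; since $G$ is connected and $V$ is a nontrivial irreducible $G$-representation, these coinvariants vanish.

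Combining the two observations, the numerator is $O(q^{m(d-1/2)})$, while the same trace formula applied to the constant sheaf gives $|X(\mathbf F_{q^m})|=q^{md}+O(q^{m(d-1/2)})$, so the ratio is $O(q^{-m/2})\to 0$, completing the verification. The main obstacle is the vanishing of $H^{2d}_c$: it rests on identifying the top cohomology with the coinvariants $(\mathcal F_{\chi,\bar\eta})_{\pi_1^{\mathrm{geom}}}(-d)$ and on the absence of nonzero $G$-invariants in $V$, which in turn uses the connectedness of $G$ together with the semisimplicity of $G^0$ guaranteed by the $\iota$-purity hypothesis. These are precisely the ingredients needed to make Deligne's weight bounds sharp enough to yield equidistribution.
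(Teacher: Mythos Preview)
Your proposal is correct and follows the standard argument due to Deligne and Katz. Note, however, that the paper does not supply its own proof of this theorem: it is stated as background and attributed to \cite[3.5.3]{De80} and \cite[3.6]{Ka88}, while the paper's contribution is the \emph{effective} version (Theorem~\ref{thm:effectiveDeligne}). That said, the key cohomological input you use---the vanishing of $H^{2d}_c$ via geometric (co)invariants together with Deligne's weight bound on the lower cohomology---is exactly what the paper isolates in Proposition~\ref{prop:Weil} and then feeds into the Erd\H{o}s--Tur\'an machinery, so your argument and the paper's effective proof rest on the same arithmetic core.

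One minor remark on your final paragraph: the vanishing of $\pi_1^{\mathrm{geom}}$-coinvariants in $\mathcal F_{\chi,\bar\eta}$ does not actually require the connectedness of $G$ or the semisimplicity of $G^0$. Since the image of $\pi_1^{\mathrm{geom}}$ is Zariski-dense in $G$ and $V$ is an irreducible nontrivial algebraic $G$-representation, $V$ is already irreducible and nontrivial as a $\pi_1^{\mathrm{geom}}$-module, whence both invariants and coinvariants vanish. The semisimplicity of $G^0$ (from $\iota$-purity) is used elsewhere in the paper to set up the compact form $G_{\mathbf R}$, not for this step.
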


The main result of this paper is an effective version of the above theorem. 

\begin{theorem}\label{thm:effectiveDeligne} Let $\mathcal F$ be a lisse $\overline{\mathbf Q}_\ell$-sheaf on $X$ punctually 
$\iota$-pure of weight $0$, and let $G$ be its geometric monodromy group. Suppose $G$
is connected and the arithmetic monodromy group of $\mathcal F$ coincides with $G$. Then for any 
small box $D$ in $G^\natural_{\mathbf R}$, we have 
\begin{eqnarray*}
\frac{|\{x\in X(\mathbf F_{q^m}):\rho_{\mathcal F}(F_x)^{\mathrm{ss}}\in D\}|}{|X(\mathbf F_{q^m})|}
=\mu_{G_{\mathbf R}^\natural}(D)
+O\Big(q^{-\frac{m}{2(|R^+|+1)}}(\log q^m)^{\frac{n-1}{|R^+|+1}}\Big),
\end{eqnarray*} where $n:=\mathrm{dim}_{\mathbf C}\mathfrak h$ is the rank of the Lie group, 
and the constant implied by $O$ depends only on $X$ and $\mathcal F$. 
\end{theorem}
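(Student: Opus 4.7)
The plan is to prove an Erd\H{o}s--Tur\'an type inequality on the conjugacy class space $G_{\mathbf R}^\natural$, bound each nontrivial character sum by Deligne's Riemann Hypothesis, and finally optimize a truncation parameter $N$. For the Erd\H{o}s--Tur\'an step, I would use the Peter--Weyl theorem to expand any integrable class function on $G_{\mathbf R}$ as $f=\sum_\lambda \widehat f(\lambda)\chi_\lambda$ over dominant integral weights $\lambda$, where $\chi_\lambda$ is the character of the irreducible representation $V_\lambda$. The goal is to construct trigonometric majorants $f_+$ and minorants $f_-$ of $\mathbf 1_D$, supported on characters with $|\lambda|\le N$, such that $\int(f_+-f_-)\,\mathrm d\mu_{G_{\mathbf R}^\natural}\ll 1/N$ and the coefficients decay like $1/r(\lambda)$, where $r(\lambda)=\prod_i\max(1,|\lambda_i|)$ in coordinates relative to the fundamental weights. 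Via Weyl's integration formula, this reduces to a Fej\'er-type construction of upper and lower approximants of a $W$-invariant ``box'' on the maximal torus $T\cong(S^1)^n$, weighted against the Weyl denominator.

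Next I bound each nontrivial character sum. For a dominant weight $\lambda\neq 0$, the composition $\rho_\lambda\circ\rho_{\mathcal F}$ yields a lisse $\overline{\mathbf Q}_\ell$-sheaf $\mathcal F_\lambda$ on $X$, pure of weight $0$ and of rank $\dim V_\lambda$. Because arithmetic and geometric monodromy both coincide with $G$ and $\rho_\lambda$ is irreducible nontrivial, the geometric invariants and coinvariants of $\mathcal F_\lambda$ at $\bar\eta$ vanish, so $H^0_c(X\otimes_{\mathbf F_q}\overline{\mathbf F}_q,\mathcal F_\lambda)=H^{2\dim X}_c(X\otimes_{\mathbf F_q}\overline{\mathbf F}_q,\mathcal F_\lambda)=0$. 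The Grothendieck--Lefschetz trace formula combined with Deligne's Riemann Hypothesis then yields
\[
\Big|\sum_{x\in X(\mathbf F_{q^m})}\chi_\lambda(\rho_{\mathcal F}(F_x)^{\mathrm{ss}})\Big|\ll (\dim V_\lambda)\, C_{X,\mathcal F}\, q^{m(\dim X-1/2)},
\]
where $C_{X,\mathcal F}$ is a uniform constant controlling the relevant compactly supported Betti numbers and depends only on $X$ and $\mathcal F$, not on $\lambda$.

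Combining these two inputs and normalizing by $|X(\mathbf F_{q^m})|\sim q^{m\dim X}$, the discrepancy is bounded by
\[
\frac{1}{N}+q^{-m/2}\sum_{0<|\lambda|\le N}\frac{\dim V_\lambda}{r(\lambda)}.
\]
The Weyl dimension formula $\dim V_\lambda=\prod_{\boldsymbol\alpha\in R^+}\langle\lambda+\rho,\boldsymbol\alpha\rangle/\langle\rho,\boldsymbol\alpha\rangle$ shows that $\dim V_\lambda$ is a polynomial in $\lambda$ of degree $|R^+|$, and a careful estimation of the resulting sum gives an upper bound of the shape $N^{|R^+|}(\log N)^{n-1}$. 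Balancing $1/N$ against $q^{-m/2}N^{|R^+|}(\log N)^{n-1}$ with the choice $N\asymp q^{m/(2(|R^+|+1))}(\log q^m)^{-(n-1)/(|R^+|+1)}$ produces precisely the announced error.

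The main obstacle is the first step. Unlike the classical Koksma--Hlawka setting on a torus, here one needs trigonometric majorants of $\mathbf 1_D$ built from irreducible characters of the whole non-abelian group, with simultaneous quantitative control of the truncation level $N$, the coefficient decay against $r(\lambda)$, and the $L^1$ approximation error against $\mu_{G_{\mathbf R}^\natural}$. The Weyl integration formula and the Weyl character formula do transfer the problem to the maximal torus, but the Jacobian $|\Delta(t)|^2$ and the $W$-invariance constraint make the explicit majorant construction markedly more delicate than in the classical torus case, and extracting the correct power of $\log N$ from the resulting sums is what produces the refined exponent $(n-1)/(|R^+|+1)$ in the final error term.
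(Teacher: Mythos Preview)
Your proposal is correct and follows essentially the same strategy as the paper: an Erd\H{o}s--Tur\'an inequality for $G_{\mathbf R}^\natural$ (the paper's Theorem~\ref{thm:effectiveWeyl}, proved via the Colzani--Gigante--Travaglini majorants on the torus and then symmetrized over $W$), combined with Deligne's weight bound on the character sums and the Weyl dimension formula, followed by the identical optimization of the truncation parameter. The one point you assert but should not take for granted is the uniformity in $\lambda$ of your constant $C_{X,\mathcal F}$: the paper secures this by observing that the monoid $I^*\cap\mathfrak C$ is finitely generated, so all the sheaves $\mathcal F_\lambda$ share a \emph{common} finite extension $E/\mathbf Q_\ell$ and a common finite \'etale cover $X'\to X$ trivializing their reductions, after which a single Betti-number bound (from \cite{FW04}) applies independently of $\lambda$.
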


We will describe small boxes in Definition \ref{special} below. In the case where $X$ is an algebraic curve, let $\overline X$ be a 
smooth compactification of $X$. Then the constant implied by $O$ in the theorem depends only on the group $G$, the genus of $\overline X$, the
number of points in $(\overline X-X)(\overline{\mathbf F}_q)$, the rank of $\mathcal F$ and the Swan conductors of $\mathcal F|_{I_x}$ 
($x\in \overline X-X$), where $I_x$ is the inertia subgroup of $\pi_1(X, \bar\eta)$  at $x$. 

We deduce the above theorem from an Erd\H{o}s-Tur\'an type inequality for compact Lie groups, which is of independent interest
and we now describe. 
Let $$N(T):=\{g\in {G_{\mathbf R}}: gTg^{-1}=T\}$$ be the normalizer of the maximal torus $T$ in ${G_{\mathbf R}}$, and 
let $W:=N(T)/T$ be the \emph{Weyl group}. It acts on $T$ by
$$W\times T\to T, \quad (gT, x)\mapsto gxg^{-1}.$$ Hence $W$ acts on the Lie algebra $\mathfrak h_{\mathbf R}$
and its dual $\mathfrak h_{\mathbf R}^*$. Provide $\mathfrak h_{\mathbf R}$ and $\mathfrak h_{\mathbf R}^*$ with the inner product induced by the Killing form. 
The Weyl group can be identified 
with 
the subgroup of $\mathrm{GL}(\mathfrak h_{\mathbf R}^*)$ generated by 
reflections with respect to ${\boldsymbol\alpha}^\perp$ $({\boldsymbol\alpha}\in R)$.
Let $T/W$ be the quotient space. The map $T\to G_{\mathbf R}^\natural$ sending each element in $T$ to its 
conjugacy class in $G_{\mathbf R}$ defines a homeomorphism $$\kappa:T/W\stackrel\cong\to G_{\mathbf R}^\natural$$
by \cite[IV Proposition 2.6]{BtD85} if we provide $T/W$ and $G_{\mathbf R}^\natural$ with the 
quotient topology induced from $T$ and $G_{\mathbf R}$, respectively. 

The map $\exp: 2\pi i\mathfrak h_{\mathbf R}\to T$ is an epimormphism. Let $2\pi i \Gamma$ be its kernel.
Then we have an isomorphism 
$$\exp(2\pi i\hbox{-}): \mathfrak h_{\mathbf R}/\Gamma\stackrel\cong \to T.$$
We call $\Gamma$ 
the \emph{integral lattice}. Define the \emph{lattice of integral forms} $I^*$ to be
$$I^*:=\{{\boldsymbol\lambda}\in \mathfrak h_{\mathbf R}^*:\, {\boldsymbol\lambda}(\Gamma)\subset \mathbf Z \}.$$ 
For any ${\boldsymbol\lambda}\in I^*$, define a character 
$$e({\boldsymbol\lambda}): T\to S^1:=\{z\in\mathbf C:\, |z|=1\}$$
which maps $\exp(2\pi i H)\in T$ to $e^{2\pi i{\boldsymbol\lambda}(H)}\in S^1$ for every $H\in \mathfrak h_{\mathbf R}$.
We thus get an isomorphism $$I^*\stackrel\cong\to\mathrm{Hom}(T, S^1), \quad \boldsymbol \lambda\mapsto e(\boldsymbol\lambda).$$ 
For any weight ${\boldsymbol\lambda}\in \mathfrak h^*$ of a representation of $\mathfrak g$ coming from 
a representation of $G$, we have $\boldsymbol\lambda\in I^*$. 
In particular, for any root ${\boldsymbol\alpha}\in R$, we have $\alpha\in I^*$ and we have a character $e({\boldsymbol\alpha})$ of $T$. 
Define the \emph{weight lattice} to be $$\Lambda:=
\{{\boldsymbol\lambda}\in \mathfrak h_{\mathbf R}^*:\, {\boldsymbol\lambda}(H_{\boldsymbol\alpha})\in \mathbf Z \hbox{ for all } 
{\boldsymbol\alpha}\in R \}.$$
We have 
\begin{align*}
\sum_{\boldsymbol\alpha \in R}\mathbf ZH_{\boldsymbol\alpha}~\subseteq &~\Gamma~\subseteq~
\{H\in\mathfrak h_{\mathbf R}: \, \boldsymbol\alpha(H)\in\mathbf Z\hbox{ for all }\boldsymbol\alpha\in R\},\\
\Lambda~\supseteq &~I^*~ \supseteq~ \sum_{\boldsymbol\alpha \in R}\mathbf Z{\boldsymbol\alpha}  .
\end{align*}

\begin{definition}\label{special} Fix a basis $\{\mathbf e_1, \ldots, \mathbf e_n\}$ for the lattice $\Gamma$. 
A subset of $T$ of the form $$\Omega=\Big\{\exp\Big(2\pi i\sum_{j=1}^n t_j \mathbf e_j\Big):\, t_j\in I_j\Big\}$$ 
for some intervals $I_j$ of lengths $<1$ is called a \emph{box} in $T$. We say the box $\Omega$ is \emph{small} if
for any nonidentity element $\sigma$ in the Weyl group $W$, $\Omega\cap \sigma(\Omega)$ is empty. 
A subset $D$ of $T/W$ (resp. $G_{\mathbf R}^\natural$) is called a \emph{small box} 
if it is the image of a small box $\Omega$ in $T$ under the canonical map 
$$T\to T/W\quad (\mathrm{resp.} ~T\to T/W\underset\cong{\stackrel\kappa \to}G_{\mathbf R}^\natural).$$   
\end{definition}

\begin{remark} With the above notations, since the lengths of the intervals $I_j$ are $<1$, the map 
$\exp(2\pi i \hbox{-}): \mathfrak h_R\to T$ identifies the set $I_1\times\cdots \times I_n$ with its image 
$\Omega$ in $T$. Suppose furthermore that $\Omega\cap \sigma(\Omega)$ is empty for every nonidentity element 
$\sigma\in W$. Then 
the projection $T\to T/W$ identifies $\Omega$ with its image $D$ in $T/W$, and the inverse image of 
$D$ in $T$ is $\bigsqcup_{\sigma\in W} \sigma(\Omega)$. 
\end{remark}

Let 
$$\mathfrak C:=\{{\boldsymbol\lambda}\in \mathfrak h_{\mathbf R}^*:\; 
{\boldsymbol\lambda}(H_{\boldsymbol\alpha})\in \mathbf R_{\geq 0}\hbox{ for all }{\boldsymbol\alpha}\in R^+\}$$ be the 
\emph{dominant 
Weyl chamber} in $\mathfrak h_R^*$. 
For any ${\boldsymbol\lambda}\in I^*\cap \mathfrak C$, define 
$$A_{\boldsymbol\lambda}:= \sum_{\sigma\in W} \mathrm{sgn}(\sigma) e(\sigma({\boldsymbol\lambda})),$$ 
where for any $\sigma\in W$, $\mathrm{sgn}(\sigma)$ is defined to be determinant of $\sigma$ considered 
as a linear operator on $\mathfrak h_{\mathbf  R}^*$. Its value is $1$ or $-1$ since $\sigma$ is a composite of 
reflections. Let $${\boldsymbol\rho}=\frac{1}{2}\sum_{{\boldsymbol\alpha}\in R^+} {\boldsymbol\alpha}$$
be half of the sum of positive roots. Then $\frac{A_{{\boldsymbol\lambda}+{\boldsymbol\rho}}}
{A_{\boldsymbol\rho}}$ is a trigonometric polynomial function on $T$ invariant under 
the action of $W$ (\cite[VI 1.6]{BtD85}). It defines a continuous function on $T/W\cong G^\natural_{\mathbf R}$ which we 
also denote by $\frac{A_{{\boldsymbol\lambda}+{\boldsymbol\rho}}}
{A_{\boldsymbol\rho}}$. 

The basis $\{\mathbf e_1,\ldots, \mathbf e_n\}$ of $\Gamma$ in Definition \ref{special}
is also a basis of $\mathfrak h_{\mathbf R}$, which we fix henceforth. Let $\{\mathbf e_1^*,\ldots, \mathbf e_n^*\}$ be the dual basis for $\mathfrak h^*_{\mathbf R}$. 
For any $\boldsymbol\lambda\in \mathfrak h^*_{\mathbf R}$, write  $\boldsymbol{\boldsymbol\lambda}=\sum_{j=1}^n {\lambda}_j \mathbf e^*_j$ and let
\begin{align}\label{eq:N-supnorm}
{\mathbf  N}(\boldsymbol\lambda):=(|\lambda_1|+1)\cdots(|\lambda_n|+1).
\end{align}
Finally, fix a norm 
\begin{align}\label{eq:norm}
\Vert\cdot \Vert: \mathfrak h_{\mathbf R}
\to \mathbf R_{\geq 0}
\end{align}
on $\mathfrak h_{\mathbf R}.$
The following is a generation of the Erd\H{o}s--Tur\'an 
inequality (\cite[Chapter 1, Corollary 1.1]{Mo94}) to compact Lie groups. 

\begin{theorem}\label{thm:effectiveWeyl} For any sequence $x_1, \ldots, x_N\in G_{\mathbf R}^\natural$, any 
small box $D$ in $G_{\mathbf R}^\natural$
and any positive integer $M$, we have 
\begin{align*}
& \frac{|\{1\leq i\leq N:x_i\in D\}|}{N}-\mu_{G_{\mathbf R}^\natural}(D)\\
\ll& 
\frac{1}{N}\sum_{\substack{\boldsymbol\lambda\in I^*\cap\mathfrak C-\{\mathbf 0\}\\ \Vert \blambda\Vert\leq M}} 
c(\boldsymbol\lambda)\Big
|\sum_{i=1}^N \frac{A_{{\boldsymbol\lambda}+{\boldsymbol\rho}}}{A_{\boldsymbol\rho}}(x_i)\Big| +\frac{1}{M},
\end{align*}
where $c(\boldsymbol\lambda)=\sum_{\sigma\in W}\frac{1}{{\mathbf N}(\sigma(\blambda))}$. For any real number $r$, we have
\begin{eqnarray}\label{scn}
\sum_{\substack{\boldsymbol\lambda\in I^*\cap\mathfrak C-\{\mathbf 0\}\\ \Vert \boldsymbol \lambda\Vert \le M}} c(\boldsymbol\lambda) 
\Vert\boldsymbol\lambda\Vert^r \ll M^r (\log M)^{n-1}.
\end{eqnarray}
The constants implied by $\ll$ depend only on $G$.
\end{theorem}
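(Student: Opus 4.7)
The plan is to sandwich $\mathbf 1_D$ between $W$-invariant trigonometric polynomial majorants and minorants on $T$, then expand them in the orthonormal basis $\{\chi_{\boldsymbol\lambda}\}_{\boldsymbol\lambda\in I^*\cap\mathfrak C}$ of class functions on $G_{\mathbf R}$. Identify $T$ with $(\mathbf R/\mathbf Z)^n$ via $\{\mathbf e_1,\ldots,\mathbf e_n\}$, so $\Omega$ corresponds to a rectangle $B=I_1\times\cdots\times I_n$ and, by the small-box hypothesis, the preimage of $D$ in $T$ is the disjoint union $\tilde\Omega=\bigsqcup_{\sigma\in W}\sigma(\Omega)$.

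For each $I_j$ and each integer $K\ge 1$, the Vaaly (Beurling--Selberg) construction produces one-dimensional trigonometric polynomials $V^\pm_{j,K}$ of degree $\le K$ with $V^-_{j,K}\le\mathbf 1_{I_j}\le V^+_{j,K}$, $V^+_{j,K}\ge 0$, $\int(V^+_{j,K}-V^-_{j,K})\ll 1/K$, and Fourier coefficients $|\hat V^\pm_{j,K}(m)|\ll 1/(|m|+1)$ vanishing for $|m|>K$. Tensoring yields the nonnegative majorant $V^+(\mathbf t)=\prod_jV^+_{j,K}(t_j)\ge\mathbf 1_B$; for the minorant I would take the inclusion--exclusion combination
\[V^-(\mathbf t)=\prod_jV^+_{j,K}(t_j)-\sum_{j_0=1}^n(V^+_{j_0,K}-V^-_{j_0,K})(t_{j_0})\prod_{k\ne j_0}V^+_{k,K}(t_k),\]
which pointwise satisfies $V^-\le\mathbf 1_B$ (by a case analysis on which $t_j\in I_j$) and retains $|\hat V^\pm(\boldsymbol\mu)|\ll_n 1/\mathbf N(\boldsymbol\mu)$, Fourier support in $\{\|\boldsymbol\mu\|_\infty\le K\}$, and $\int(V^+-V^-)\ll_n 1/K$. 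Averaging over $W$, the functions $\tilde V^\pm:=\sum_{\sigma\in W}V^\pm\circ\sigma^{-1}$ are $W$-invariant, descend to class functions on $G^\natural_{\mathbf R}$, and satisfy $\tilde V^-\le\mathbf 1_{\tilde\Omega}\le\tilde V^+$.

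Using the Weyl integration formula together with the alternating behaviour of $A_{\boldsymbol\rho}$ and $A_{\boldsymbol\lambda+\boldsymbol\rho}$ under $W$, the symmetrization collapses in the coefficient computation and the expansion $\tilde V^\pm=\sum_{\boldsymbol\lambda}a^\pm_{\boldsymbol\lambda}\chi_{\boldsymbol\lambda}$ yields
\[a^\pm_{\boldsymbol\lambda}=\int_T V^\pm\,\overline{A_{\boldsymbol\lambda+\boldsymbol\rho}}\,A_{\boldsymbol\rho}\,dt=\sum_{\sigma,\tau\in W}\mathrm{sgn}(\sigma\tau)\,\hat V^\pm\bigl(\tau(\boldsymbol\lambda+\boldsymbol\rho)-\sigma\boldsymbol\rho\bigr).\]
The Fourier support of $V^\pm$ makes $a^\pm_{\boldsymbol\lambda}$ vanish once $\|\boldsymbol\lambda\|$ exceeds $K$ by a $G$-dependent constant, and the stability $\mathbf N(\tau\boldsymbol\lambda+\boldsymbol\delta)\asymp_G\mathbf N(\tau\boldsymbol\lambda)$ for $\|\boldsymbol\delta\|\ll_G 1$ absorbs the $\boldsymbol\rho$-shifts and gives $|a^\pm_{\boldsymbol\lambda}|\ll_G c(\boldsymbol\lambda)$. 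The sandwich inequality together with $|\int\tilde V^\pm\,d\mu_{G^\natural_{\mathbf R}}-\mu_{G^\natural_{\mathbf R}}(D)|\le\|A_{\boldsymbol\rho}\|_\infty^2\int_T|V^\pm-\mathbf 1_B|\,dt\ll_G 1/K$ then delivers the main inequality after choosing $K=M+O_G(1)$.

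For the auxiliary estimate \eqref{scn}, I would unfold the Weyl orbit sum $\sum_{\boldsymbol\lambda}c(\boldsymbol\lambda)\|\boldsymbol\lambda\|^r=\sum_\sigma\sum_{\boldsymbol\lambda}\|\boldsymbol\lambda\|^r/\mathbf N(\sigma\boldsymbol\lambda)$ and substitute $\boldsymbol\mu=\sigma\boldsymbol\lambda$; using equivalence of norms this reduces to $\sum_{\boldsymbol\mu\in I^*\setminus\{0\},\,\|\boldsymbol\mu\|\ll M}\|\boldsymbol\mu\|^r/\mathbf N(\boldsymbol\mu)$, which I would evaluate by grouping lattice points according to the coordinate $|\mu_{j_0}|$ achieving the maximum and applying $\sum_{k\le M}k^{r-1}\ll_r M^r$ in that coordinate with $\sum_{|m|\le k}1/(|m|+1)\ll\log k$ in each of the remaining $n-1$ coordinates. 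The hard part is the multidimensional minorant: since the one-dimensional Vaaly minorants take negative values outside $I_j$ a naive tensor product fails, and verifying that the inclusion--exclusion formula above is a genuine pointwise minorant of $\mathbf 1_B$ requires a careful case analysis sorting coordinates by whether $t_j\in I_j$; a secondary technical point is the absorption of the $\boldsymbol\rho$-shift in the character coefficient formula, which must not degrade the $c(\boldsymbol\lambda)$ bound.
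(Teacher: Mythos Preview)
Your proposal is correct and follows the same architecture as the paper: construct trigonometric majorants/minorants of the box $\Omega$ on $T$ with Fourier coefficients $\ll 1/\mathbf N(\boldsymbol\mu)$, symmetrize over $W$ to obtain class functions sandwiching $\chi_D$, expand in the basis $\{A_{\boldsymbol\lambda+\boldsymbol\rho}/A_{\boldsymbol\rho}\}$, and control the coefficients via the identity $a^\pm_{\boldsymbol\lambda}=\int V^\pm\,\overline{A_{\boldsymbol\lambda+\boldsymbol\rho}}A_{\boldsymbol\rho}$ together with $\mathbf N(\tau\boldsymbol\lambda+\boldsymbol\delta)\asymp_G\mathbf N(\tau\boldsymbol\lambda)$. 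Your treatment of \eqref{scn} (unfold the $W$-sum, substitute $\boldsymbol\mu=\sigma\boldsymbol\lambda$, isolate the maximal coordinate) is exactly what the paper does.

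The only genuine difference is the source of the box majorants/minorants. The paper invokes the Colzani--Gigante--Travaglini machinery (Proposition~\ref{thCGT}), which builds $B^\pm$ from a smooth bump $K$ with compactly supported Fourier transform and a boundary--distance correction $H_M$; this works for quite general domains and gives the $O(1/\mathbf N)$ bound via their Lemma~2.9. You instead tensor one--dimensional Selberg--Vaaler polynomials and use Cochrane's inclusion--exclusion minorant $V^-=\prod_jV^+_j-\sum_{j_0}(V^+_{j_0}-V^-_{j_0})\prod_{k\ne j_0}V^+_k$. Your worry about this being the ``hard part'' is overstated: since each $V^+_j\ge\mathbf 1_{I_j}\ge 0$, the verification that $V^-\le\mathbf 1_B$ reduces, when $\mathbf t\in B$, to the elementary inequality $\sum_j x_j-\prod_j x_j\le n-1$ on $[0,1]^n$ (monotone in each variable, so maximized at $(1,\dots,1)$), and when some $t_{j_0}\notin I_{j_0}$, to $b_{j_0}\le 0$ together with $b_j\le a_j$ and $a_j\ge 0$. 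Your route is more elementary and self--contained for boxes; the paper's route is more flexible in the shape of the region but imports a heavier black box.
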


\begin{remark} For $G=\mathrm{SL}_2$, Theorem \ref{thm:effectiveWeyl} is due to Niederreiter \cite[Lemma 3]{Ni91}.
Rouse and Thorner \cite[Lemma 3.1]{RT17} give another proof of Niederreiter's theorem 
using the Beurling and Selberg majorizing and minorizing functions. We prove Theorem \ref{thm:effectiveWeyl} using 
the general theory of the majorizing and minorizing functions of Colzani-Gigante-Travaglini (\cite{CGT11}). In \cite{R13}, Rosengarten
obtains a version of the Erd\H{o}s--Tur\'an inequality for simply connected compact Lie groups which is different from ours.
\end{remark}

Let $\mathcal F'$ be another lisse $\overline{\mathbf Q}_\ell$-sheaves on $X$ punctually $\iota$-pure of weight $0$, 
$\rho_{\mathcal F'}:\pi_1(X, \bar \eta)\to \mathrm{GL}(\mathcal F'_{\bar\eta})$ the corresponding representation, 
$G'$ the geometric monodromy group, $G'_{\mathbf R}$ a
maximal compact subgroup of $G'_{\mathbf C}$, 
$\mu_{G'^\natural_{\mathbf R}}$ the measure on $G'^\natural_{\mathbf R}$ induced by  
the Haar measure on $G'_{\mathbf R}$.  We have the following theorem on joint distribution.

\begin{theorem}\label{thm:jointdistribution} Let $\mathcal F$ and $\mathcal F'$ be two lisse $\overline{\mathbf Q}_\ell$-sheaves
punctually $\iota$-pure of weight $0$. Suppose their geometric mondromy groups $G$ and $G'$ are connected and 
coincide with their arithmetic monodromy groups, respectively. Suppose furthermore that for any nontrivial 
irreducible $\overline{\mathbf Q}_\ell$-representations $\Gamma$ of $G$ and $\Gamma'$ of $G'$, 
the representation $(\Gamma \circ\rho_{\mathcal F})\otimes (\Gamma'\circ\rho_{\mathcal F'})$ has no nonzero
$\pi_1(X\otimes_{\mathbf F_q}\overline{\mathbf F}_q, \bar\eta)$-invariant. Then 
for any small boxes $D$ in $G^\natural_{\mathbf R}$ and $D'$ in $G'^\natural_{\mathbf R}$, we have 
\begin{align*}
&\ \ \ \ \frac{|\{x\in X(\mathbf F_{q^m}):\, 
\rho_{\mathcal F}(F_x)^{\mathrm{ss}}\in D,\, \rho_{\mathcal F'}(F_x)^{\mathrm ss}\in D'\}|}{|X(\mathbf F_{q^m})|}\\
&=
\mu_{G^\natural_{\mathbf R}}(D)\mu_{G'^\natural_{\mathbf R}}(D')+
O\Big(q^{-\frac{m}{2(|R^+|+|R'^+|+1)}}(\log q^m)^{\frac{n+n'-2}{|R^+|+|R'^+|+1}}\Big),
\end{align*} where $n$ and $n'$ are the dimensions of the Cartan subalgebras of $\mathfrak g$ and $\mathfrak g'$,
and the constant implied by $O$ depends only on $X$, $\mathcal F$ and $\mathcal F'$. 
\end{theorem}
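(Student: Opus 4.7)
The plan is to apply the Erd\H{o}s--Tur\'an inequality of Theorem~\ref{thm:effectiveWeyl} to the connected compact Lie group $G_{\mathbf R}\times G'_{\mathbf R}$, whose maximal torus is $T\times T'$, Weyl group is $W\times W'$, root system $R\sqcup R'$ has $|R^+|+|R'^+|$ positive roots, and rank is $n+n'$. I take the sequence $y_x:=(\rho_{\mathcal F}(F_x)^{\mathrm{ss}},\rho_{\mathcal F'}(F_x)^{\mathrm{ss}})$ indexed by $x\in X(\mathbf F_{q^m})$, and note that the product $D\times D'$ is a small box in $(G_{\mathbf R}\times G'_{\mathbf R})^\natural$ (since disjointness of Weyl translates in one factor suffices). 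Because the Cartan, the integral-form lattice, the Weyl group and the norm $\mathbf N$ all split as products, the characters factor as $A_{(\blambda,\blambda')+(\boldsymbol\rho,\boldsymbol\rho')}/A_{(\boldsymbol\rho,\boldsymbol\rho')}=\chi_\blambda\otimes \chi_{\blambda'}$ with $\chi_\blambda:=A_{\blambda+\boldsymbol\rho}/A_{\boldsymbol\rho}$, and $c(\blambda,\blambda')=c(\blambda)\,c(\blambda')$.

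I then split the weight sum into three regions: (i)~$\blambda\ne 0,\blambda'=0$; (ii)~$\blambda=0,\blambda'\ne 0$; (iii)~$\blambda\ne 0$ and $\blambda'\ne 0$. In (i) the inner sum reduces to $\sum_x \chi_\blambda(\rho_{\mathcal F}(F_x)^{\mathrm{ss}})$; since $G$ is connected and $\Gamma_\blambda$ is a nontrivial irreducible representation, $\Gamma_\blambda\circ\rho_{\mathcal F}$ carries no geometric invariants, so Deligne's bound (Weil~II) yields $\ll \dim(\Gamma_\blambda)\,q^{m(\dim X-1/2)}$ with constant depending only on $X$ and $\mathcal F$. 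Case (ii) is symmetric. In (iii) the inner sum is the trace of Frobenius on $(\Gamma_\blambda\circ\rho_{\mathcal F})\otimes(\Gamma_{\blambda'}\circ\rho_{\mathcal F'})$, which by the joint-independence hypothesis has no geometric invariants, so Deligne's bound gives $\ll \dim(\Gamma_\blambda)\dim(\Gamma_{\blambda'})\,q^{m(\dim X-1/2)}$.

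Using the Weyl dimension formula $\dim(\Gamma_\blambda)\ll \Vert\blambda\Vert^{|R^+|}$ and the analogue for $\Gamma_{\blambda'}$, together with $N\asymp q^{m\dim X}$ and \eqref{scn}, I bound cases (i) and (ii) by $q^{-m/2}M^{|R^+|}(\log M)^{n-1}$ and its symmetric counterpart. Case (iii), thanks to the multiplicativity $c(\blambda,\blambda')=c(\blambda)c(\blambda')$, factors into two independent sums and contributes
\begin{align*}
\ll q^{-m/2}\Big(\sum_{\Vert\blambda\Vert\leq M} c(\blambda)\Vert\blambda\Vert^{|R^+|}\Big)\Big(\sum_{\Vert\blambda'\Vert\leq M}c(\blambda')\Vert\blambda'\Vert^{|R'^+|}\Big)\ll q^{-m/2}M^{|R^+|+|R'^+|}(\log M)^{n+n'-2},
\end{align*}
which dominates (i) and (ii). Balancing this against $1/M$ by taking $M\asymp q^{m/(2(|R^+|+|R'^+|+1))}(\log q^m)^{-(n+n'-2)/(|R^+|+|R'^+|+1)}$ produces the claimed error.

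The key point is the factorization in case (iii): it is what saves one logarithmic factor over the naive route of applying Theorem~\ref{thm:effectiveDeligne} directly to $\mathcal F\oplus\mathcal F'$, which would produce $(\log q^m)^{n+n'-1}$ rather than the sharper $(\log q^m)^{n+n'-2}$. The main obstacle is therefore not computational but structural: one must verify that Theorem~\ref{thm:effectiveWeyl} transfers cleanly to the product group (with all of $W$, $\Gamma$, $I^*$, $\mathbf N$ splitting factorwise), and that the joint-independence hypothesis is precisely what licenses Deligne's square-root cancellation in the ``doubly nonzero'' case, while the two ``mixed'' cases are handled by the single-sheaf mechanism already used for Theorem~\ref{thm:effectiveDeligne}.
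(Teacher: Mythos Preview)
Your proposal is correct and follows essentially the same route as the paper: apply the Erd\H{o}s--Tur\'an inequality for the product group $G_{\mathbf R}\times G'_{\mathbf R}$, invoke Deligne's square-root cancellation (Proposition~\ref{prop:Weil}) on the tensor sheaves $\mathcal G_{\boldsymbol\lambda}\otimes\mathcal G_{\boldsymbol\lambda'}$, bound dimensions via Weyl, and optimize $M$. The only cosmetic difference is that the paper packages your three-case split and the factorization $c(\boldsymbol\lambda,\boldsymbol\lambda')=c(\boldsymbol\lambda)c(\boldsymbol\lambda')$ into a single combined estimate (Proposition~\ref{thm:jointeffectiveWeyl}, equation~\eqref{joint}) proved directly, whereas you derive the same $(\log M)^{n+n'-2}$ saving by applying~\eqref{scn} to each factor separately; both arguments exploit the identical mechanism that $W\times W'$ and $\mathbf N$ split multiplicatively.
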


As an application, we prove 
a joint distribution theorem for the Kloosterman sums and the Airy sums. 
For any $x\in \mathbb G_m(\mathbf F_{q^m})\cong \mathbf F^*_{q^m}$, the Kloosterman sum is defined by 
$$\mathrm{Kl}(\mathbf F_{q^m}, x)=
\sum_{z\in \mathbf F^*_{q^m}}e^{\frac{2\pi i}{p}\mathrm{Tr}_{\mathbf F_{q^m}/\mathbf F_p}(z+\frac{x}{z})}.$$
Deligne \cite[Sommes trig. 7.8]{De77} constructs the Kloosterman sheaf $\mathrm{Kl}$ which is a lisse 
$\overline{\mathbf Q}_\ell$-adic sheaf on 
$\mathbb G_m$ punctually $\iota$-pure of weight $1$ such that 
$$\iota\mathrm{Tr}(F_x, \mathrm{Kl}_{\bar x})
=-\mathrm{Kl}(\mathbf F_{q^m},x).$$ 
for all $x\in \mathbb G_m(\mathbf F_{q^m})$. Consider the twist $\mathrm{Kl}(1/2)$ of $\mathrm{Kl}$ by 
$\mathbf Q_\ell(1/2)$ so that  $\mathrm{Kl}(1/2)$ is punctually $\iota$-pure of weight $0$. Katz (\cite[11.1, 11.3]{Ka88}) 
shows that both the arithmetic and the geometric monodromy groups of $\mathrm{Kl}(1/2)$ are 
$\mathrm{SL}_2$. Define an angle 
$\theta(x)$ by 
\begin{align*}
\mathrm{Kl}(\mathbf F_{q^m},x)&= 2 q^{m/2}\cos\theta(x).
\end{align*}

For any $x\in \mathbb A^1(\mathbf F_{q^m})\cong \mathbf F_{q^m}$, the Airy sum is defined by 
$$\mathrm{Ai}(\mathbf F_{q^m}, x)=
\sum_{z\in \mathbf F_{q^m}}e^{\frac{2\pi i}{p}\mathrm{Tr}_{\mathbf F_{q^m}/\mathbf F_p}(z^3+xz)}.$$
Katz (\cite[Theorem 17]{Ka87}) constructs the Airy sheaf $\mathrm{Ai}$ which is a lisse $\overline{\mathbf Q}_\ell$-adic sheaf on 
$\mathbb A^1$ punctually pure of weight $1$ such that 
$$\iota \mathrm{Tr}(F_x, \mathrm{Ai}_{\bar x})
=-\mathrm{Ai}(\mathbf F_{q^m}, x).$$ 
for all $x\in \mathbb A^1(\mathbf F_{q^m})$. 
Katz (\cite[Theorem 17 (iii), Corollary 20]{Ka87}) proves that 
$\mathrm{det}(\mathrm{Ai})$ is geometrically constant, and is given by the character 
$\mathrm{Gal}(\overline{\mathbf F}_q/\mathbf F_q)\to \overline{\mathbf Q}_\ell^*$ mapping the geometric Frobenius element
in $\mathrm{Gal}(\overline{\mathbf F}_q/\mathbf F_q)$ to $\prod_{\chi^3=1,\,\chi\not=1}(-g(\chi, \psi))$,
where the product is taken over all nontrivial multiplicative characters $\chi: 
\mathbf F^*_q\to \overline{\mathbb Q}_\ell^*$ of order dividing $3$ and 
$$g(\chi, \psi)=\sum_{x\in \mathbf F^*_q} \chi(x)\psi(x)$$ is the Gauss sum. Let $\alpha$ be  
a square root of $\prod_{\chi^3=1,\,\chi\not=1}(-g(\chi, \psi))^{-1}$. Let $\mathrm{Ai}(\alpha)$ be the twist of the Airy sheaf
by the character 
$\mathrm{Gal}(\overline{\mathbf F}_q/\mathbf F_q)\to \overline{\mathbf Q}_\ell^*$ sending the geometric Frobenius element
in $\mathrm{Gal}(\overline{\mathbf F}_q/\mathbf F_q)$ to $\alpha$. Then 
$\mathrm{Ai}(\alpha)$ is punctually $\iota$-pure of weight $0$ and $\mathrm{det}(\mathrm{Ai}(\alpha))=1$. 
Katz (\cite[Theorem 19]{Ka87}) shows that both the arithmetic and the geometric monodromy group of 
$\mathrm{Ai}(\alpha)$ is $\mathrm{SL}_2$ if $p>7$. 
Define an angle 
$\theta'(x)$ by 
\begin{align*}
\mathrm{Ai}(\mathbf F_{q^m}, x)&= 2 \alpha^{-m}\cos\theta'(x).
\end{align*}

\begin{corollary} Suppose $p>7$. 
For any subintervals $I, I'\subset [0,\pi],$ we have
\begin{align*}
\frac{1}{q^m-1}\Big|\{x\in \mathbb G_m(\mathbf F_{q^m}): 
\theta(x)\in I,\; \theta'(x)\in I'\}\Big| =\mu_{\mathrm{ST}}(I) \mu_{\mathrm{ST}}(I')
+O(q^{-\frac{m}{6}}),
\end{align*} where $\mu_{ST}=\frac{2}{\pi}\sin^2\theta \,\mathrm d\theta$ is the Sato-Tate measure on $[0,\pi]$. 
\end{corollary}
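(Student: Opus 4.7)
The plan is to invoke Theorem~\ref{thm:jointdistribution} with base $X=\mathbb G_m$, $\mathcal F=\mathrm{Kl}(1/2)$, and $\mathcal F'=\mathrm{Ai}(\alpha)|_{\mathbb G_m}$. Both sheaves are punctually $\iota$-pure of weight $0$ by construction, and both have arithmetic and geometric monodromy groups equal to $\mathrm{SL}_2$ by the cited theorems of Katz (the restriction of $\mathrm{Ai}(\alpha)$ from $\mathbb A^1$ to $\mathbb G_m$ does not shrink the monodromy, since $\pi_1(\mathbb G_m\otimes_{\mathbf F_q}\overline{\mathbf F}_q,\bar\eta)\twoheadrightarrow \pi_1(\mathbb A^1\otimes_{\mathbf F_q}\overline{\mathbf F}_q,\bar\eta)$ is surjective). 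What remains is the geometric independence hypothesis.

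This is the main obstacle. The nontrivial irreducible representations of $\mathrm{SL}_2$ are the symmetric powers $\mathrm{Sym}^k$ ($k\ge 1$) of the standard representation, all self-dual, so the independence condition is equivalent to the statement that the geometrically irreducible sheaves $\mathrm{Sym}^k(\mathrm{Kl}(1/2))$ and $\mathrm{Sym}^{k'}(\mathrm{Ai}(\alpha))|_{\mathbb G_m}$ are not geometrically isomorphic on $\mathbb G_m$ for any $k,k'\ge 1$. I would dispatch this by comparing local monodromy at $0\in\mathbb A^1$: the Kloosterman sheaf $\mathrm{Kl}$ is tame at $0$ with monodromy a nontrivial unipotent element of $\mathrm{SL}_2$, so $\mathrm{Sym}^k(\mathrm{Kl})$ has at $0$ a single nontrivial Jordan block of size $k+1$; whereas the Airy sheaf is lisse on all of $\mathbb A^1$, so $\mathrm{Sym}^{k'}(\mathrm{Ai})|_{\mathbb G_m}$ has trivial monodromy at $0$. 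For $k\ge 1$ these local representations cannot agree, which forces the two sheaves to be non-isomorphic.

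With the hypotheses secured, I would apply Theorem~\ref{thm:jointdistribution}. For $\mathrm{SL}_2$ the rank is $n=n'=1$ and $|R^+|=|R'^+|=1$, so the error term collapses to $q^{-m/(2\cdot 3)}(\log q^m)^{0/3}=q^{-m/6}$. To convert the geometric output into the stated analytic form I would use the classical homeomorphism $\mathrm{SU}(2)^\natural\cong[0,\pi]$, under which the Haar measure pushes forward to $\mu_{\mathrm{ST}}=\frac{2}{\pi}\sin^2\theta\,\mathrm d\theta$ and $\rho_{\mathrm{Kl}(1/2)}(F_x)^{\mathrm{ss}}\in D\Leftrightarrow\theta(x)\in I$ (and similarly for $\theta'$). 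Small boxes correspond to open subintervals of $(0,\pi)$, while a general subinterval $I\subset[0,\pi]$ differs from a small box only at endpoints of vanishing Sato--Tate mass, so the asymptotic extends to arbitrary subintervals without loss. Dividing by $|\mathbb G_m(\mathbf F_{q^m})|=q^m-1$ produces the advertised formula.
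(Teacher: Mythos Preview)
Your proposal is correct and follows essentially the same route as the paper: verify the hypotheses of Theorem~\ref{thm:jointdistribution} for $\mathrm{Kl}(1/2)$ and $\mathrm{Ai}(\alpha)$ on $\mathbb G_m$, dispatch the independence condition by contrasting the local monodromy at $0$ (ramified for $\mathrm{Sym}^k(\mathrm{Kl})$, unramified for $\mathrm{Sym}^{k'}(\mathrm{Ai})$), and then read off the numerics $n=n'=|R^+|=|R'^+|=1$. Your version is in fact slightly more careful than the paper's on two points it leaves implicit: that restricting $\mathrm{Ai}(\alpha)$ from $\mathbb A^1$ to $\mathbb G_m$ preserves the monodromy, and that an arbitrary subinterval of $[0,\pi]$ can be approximated by a small box up to a set of Sato--Tate measure zero.
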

 
\begin{proof} We apply Theorem \ref{thm:jointdistribution} to the case where $\mathcal F=\mathrm{Kl}(1/2)$ and 
$\mathcal F'=\mathrm{Ai}(\alpha)$. For $\mathfrak{sl}_2$, the dimension of a Cartan subalgebra is 
$$n=n'=1,$$
and the number 
of positive roots is $$|R^+|=|R'^+|=1.$$ 
A maximal compact subgroup of $\mathrm{SL}_2(\mathbf C)$ is $\mathrm{SU}(2)$, and any element in $\mathrm{SU}(2)$
is conjugate to 
$(\begin{smallmatrix}
e^{i\theta}&\\
&e^{-i\theta}
\end{smallmatrix})$
for a unique $\theta\in [0, \pi]$. So $\mathrm{SU}(2)^\natural$ is identified with $[0,\pi]$. 
For any 
$x\in \mathbb G_m(\mathbf F_{q^m})$, the conjugacy class $\rho_{\mathrm{Kl}(1/2)}(F_x)^{\mathrm{ss}}\in \mathrm{SU}(2)^\natural$ 
corresponds to the angle 
$\theta(x)\in [0, \pi]$, and $\rho_{\mathrm{Ai}(\alpha)}(F_x)^{\mathrm{ss}}\in \mathrm{SU}(2)^\natural$ 
corresponds to the angle 
$\theta'(x)\in [0, \pi]$. Using Weyl's integration formula,
one can verify $\mu_{\mathrm{SU}(2)^\natural}$ is identified with the Sato-Tate measure on $[0,\pi]$. 
Confer Proposition \ref{generalST} below. Irreducible representations of $\mathrm{SL}_2$ are symmetric products of 
the standard representation. We claim that 
for any positive integers $k_1, k_2$, $\mathrm{Sym}^{k_1}(\mathrm{Kl})\otimes \mathrm{Sym}^{k_2}(\mathrm{Ai})$ has 
no nonzero $\pi_1(\mathbb G_{m,\overline{\mathbf F}_q},\bar\eta)$-invariant. 
Our assertion then follows from Theorem \ref{thm:jointdistribution}.

By \cite[4.1.3 and 4.1.4]{Ka88}, $\mathrm{Kl}$ is geometrically self dual. So we have 
\begin{align*}
&\ \ \ \ \Big(\mathrm{Sym}^{k_1}(\mathrm{Kl})_{\bar \eta}\otimes \mathrm{Sym}^{k_2}(\mathrm{Ai})_{\bar \eta}\Big)
^{\pi_1(\mathbb G_{m, \overline{\mathbf F}_q},\bar \eta)}\\
&\cong \Big(\mathrm{Sym}^{k_1}(\mathrm{Kl})_{\bar \eta}^\vee\otimes \mathrm{Sym}^{k_2}(\mathrm{Ai})_{\bar \eta}\Big)
^{\pi_1(\mathbb G_{m, \overline{\mathbf F}_q},\bar \eta)}\\
&\cong \mathrm{Hom}_{\pi_1(\mathbb G_{m, \overline{\mathbf F}_q},\bar \eta)}
\Big(\mathrm{Sym}^{k_1}(\mathrm{Kl})_{\bar \eta}, \mathrm{Sym}^{k_2}(\mathrm{Ai})_{\bar \eta}\Big).
\end{align*}
By \cite[Sommes trig. 7.8 (iii)]{De77}, the sheaf $\mathrm{Sym}^{k_1}(\mathrm{Kl})$ 
is ramified at $0$ for all $k_1\geq 1$. But $\mathrm{Sym}^{k_2}(\mathrm{Ai})$ is unramified at $0$.
So $\mathrm{Sym}^{k_1}(\mathrm{Kl})_{\bar \eta}$ and $\mathrm{Sym}^{k_2}(\mathrm{Ai})_{\bar \eta}$ are non-isomorphic 
irreducible representations of $\pi_1(\mathbb G_{m, \overline{\mathbf F}_q},\bar \eta)$. By Schur's lemma, we have 
$$\mathrm{Hom}_{\pi_1(\mathbb G_{m, \overline{\mathbf F}_q},\bar \eta)}\Big(\mathrm{Sym}^{k_1}(\mathrm{Kl})_{\bar \eta}, 
\mathrm{Sym}^{k_2}(\mathrm{Ai})_{\bar \eta}\Big)=0.$$ 
So $\mathrm{Sym}^{k_1}(\mathrm{Kl})\otimes \mathrm{Sym}^{k_2}(\mathrm{Kl})$ has no nonzero 
$\pi_1(\mathbb G_{m,\overline{\mathbf F}_q},\bar\eta)$-invariant. 
\end{proof}

The paper is organized as follows. In Section 2, we adopt the work \cite{CGT11} of Colzani, Gigante and Travaglini 
to our situation. In Section 3, we prove Theorem  \ref{thm:effectiveWeyl} using the Weyl integration formula, the 
Weyl character formula, and the theorem of Colzani-Gigante-Travaglini. 
In Section 4 we deduce Theorems \ref{thm:effectiveDeligne} from Theorem  
\ref{thm:effectiveWeyl} and Deligne's theorem
(the Weil conjecture). In Section 4, we study joint distribution and prove Theorem \ref{thm:jointdistribution}. 

\subsection*{Acknowledgements} We would like to thank Winnie Li and Peng Shan for inspiring discussions. 
LF is supported by 2021YFA1000700, 2023YFA1009703 and NSFC12171261. YKL is supported by GRF (No. 17303619, 17307720). 
PX is supported by NSFC (No. 12025106).

\section{Trigonometric approximations on the torus}

Inspired by Weyl’s criterion of equidistributions and the classical Erd\H{o}s--Tur\'an inequality on $\bR,$
Colzani, Gigante and Travaglini \cite{CGT11} obtain a general form of the Erd\H{o}s--Tur\'an inequality, which covers the torus case
${\mathbf T}^n:={\mathbf R}^n/{\mathbf Z}^n$. 
For our purpose, we make use of \cite[Corollary 1.2 and Corollary 2.9]{CGT11} 
to derive the following proposition, in which we approximate the characteristic function of a box in $\bT^n$ by certain trigonometric polynomials.

\begin{proposition}
\label{thCGT} 
Let $I_j$ $(j=1,\ldots, n)$ be intervals of lengths $<1$, let $E=\prod_{j=1}^n I_j$, and let
$\Omega$ be the image $E$ in $\mathbf T^n$. Denote by $\chi_\Omega$ the characteristic function of $\Omega.$
For any positive integer $M$, there exist trigonometric polynomials 
$$B^\pm(\mathbf x)=\sum_{\substack{\mathbf k=(k_1,\cdots,k_n)\in\mathbf Z^n\\ |k_j|\leq M}}\widehat B^\pm(\mathbf k) e^{2\pi i\mathbf k\cdot\mathbf x}$$ 
such that 
\begin{eqnarray}\label{lowerupperbound}
B^-(\bx)\le \chi_\Omega(\bx) \le B^+(\bx),
\end{eqnarray}
\begin{eqnarray}\label{estimateFouriercoefficient}
\widehat B^\pm(\mathbf 0)=\int_\Omega \mathrm d\mathbf x +O\Big(\frac{1}{M}\Big), \quad
\widehat B^\pm(\mathbf k)=O\Big(\frac{1}{{\mathbf N}(\mathbf k)}\Big)\,
\end{eqnarray}
for all nonzero $\mathbf k =(k_1,\cdots, k_n)\in \mathbf Z^n$,
where
$$
 \bN(\mathbf k) := \prod_{i=1}^n (|k_i|+1).
$$  The constants implied by $O$ in (\ref{estimateFouriercoefficient}) are independent of $\Omega$. 
\end{proposition}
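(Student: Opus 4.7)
The plan is to deduce the proposition by directly adapting the one-dimensional Beurling--Selberg majorant/minorant machinery to the $n$-dimensional torus via the product-type construction of Colzani, Gigante and Travaglini. Since $\Omega=\prod_j I_j$ has characteristic function $\chi_\Omega(\bx)=\prod_{j=1}^n \chi_{I_j}(x_j)$, the natural strategy is to first produce one-dimensional majorants and minorants of trigonometric degree $\le M$ for each $\chi_{I_j}$, and then assemble them into an $n$-dimensional pair $B^\pm$ preserving both the sandwich inequality \eqref{lowerupperbound} and the Fourier coefficient estimates \eqref{estimateFouriercoefficient}.

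For the one-dimensional input I would invoke \cite[Corollary 1.2]{CGT11}, which, for each interval $I_j\subset \bT$ of length $<1$ and each positive integer $M$, supplies a non-negative trigonometric polynomial $b_j^+$ of degree $\le M$ and a trigonometric polynomial $b_j^-$ of degree $\le M$ with $b_j^-(x_j)\le \chi_{I_j}(x_j)\le b_j^+(x_j)$, with $\widehat{b_j^\pm}(0)=|I_j|+O(1/M)$ and $\widehat{b_j^\pm}(k)=O(1/(|k|+1))$ for $k\ne 0$, and with the defect $b_j^+-b_j^-$ satisfying $\int_{\bT}(b_j^+-b_j^-)=O(1/M)$.

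To pass to $n$ dimensions I would apply \cite[Corollary 2.9]{CGT11}, whose tensor-product type construction assembles the $b_j^\pm$ into the desired $B^\pm$. The majorant is simply $B^+(\bx)=\prod_{j=1}^n b_j^+(x_j)$, which dominates $\chi_\Omega$ thanks to the pointwise non-negativity of each $b_j^+$ together with $\chi_{I_j}\le b_j^+$. The minorant is then produced by a telescoping correction, essentially of the shape $B^-=\prod_j b_j^+-\sum_{j=1}^n (b_j^+-b_j^-)\prod_{i\ne j} b_i^+$, and its lower bound property is verified by the standard identity $\prod_j a_j-\prod_j c_j=\sum_j (a_j-c_j)\prod_{i<j}a_i\prod_{i>j}c_i$ applied to $B^+-\chi_\Omega$ and to $B^+-B^-$. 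Since each $B^\pm$ is a polynomial of bounded degree in the $b_j^\pm(x_j)$, its Fourier coefficients are finite $\pm 1$-combinations of products $\prod_j \widehat{b_j^{\varepsilon_j}}(k_j)$; the multiplicativity of $\bN(\mathbf k)=\prod_j(|k_j|+1)$ combined with $\widehat{b_j^\pm}(k_j)=O(1/(|k_j|+1))$ then yields $\widehat{B^\pm}(\mathbf k)=O(1/\bN(\mathbf k))$, while the estimate at $\mathbf k=\mathbf 0$ follows from $\int_{\bT}(b_j^+-b_j^-)=O(1/M)$ together with $\int_{\bT} b_i^\pm=|I_i|+O(1/M)=O(1)$.

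The main technical point in executing this plan is verifying that the telescoping construction from \cite[Corollary 2.9]{CGT11} actually produces \emph{pointwise} upper and lower bounds for $\chi_\Omega$, and not merely approximations in $L^1$; this is the nontrivial content of that corollary and rests essentially on the non-negativity of the one-dimensional majorants $b_j^+$ and of the one-sided defects $b_j^+-\chi_{I_j}$ and $\chi_{I_j}-b_j^-$. Once this is invoked as a black box, the rest of the proof is simply a translation of the notation of \cite{CGT11} into the statement above and a routine bookkeeping of Fourier coefficients.
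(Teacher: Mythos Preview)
Your approach is mathematically sound but genuinely different from the paper's, and you have misread what the cited results in \cite{CGT11} actually say.

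The paper does \emph{not} build $B^\pm$ as tensor products of one-dimensional Beurling--Selberg polynomials. Instead it works directly in $n$ dimensions: one fixes a nonnegative kernel $K$ on $\mathbf R^n$ whose Fourier transform is supported in the unit ball, forms the periodized dilate $K_M$, and invokes \cite[Corollary 1.2]{CGT11} to obtain the sandwich
\[
K_M*\chi_\Omega - K_M*H_M \le \chi_\Omega \le K_M*\chi_\Omega + K_M*H_M,
\]
where $H_M(\bx)=\tfrac14\psi(2M\,\mathrm{dist}(\bx,\partial E+\mathbf Z^n))$ is a rapidly decaying function of the distance to the boundary of the box. After replacing $H_M$ by its periodized majorant $H_{\Omega,M}$, one sets $B^\pm=K_M*\chi_\Omega\pm K_M*H_{\Omega,M}$. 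The Fourier coefficients of $K_M*\chi_\Omega$ are $\widehat K(\bk/M)\widehat\chi_\Omega(\bk)=O(1/\bN(\bk))$ by a direct product computation, and the delicate part is bounding $\widehat H_{\Omega,M}(\bk)$; this is done via \cite[Lemma 2.9]{CGT11}, which estimates $\int_{\mathbf R^n}\psi(2M\,\mathrm{dist}(\bx,\partial E))e^{-2\pi i\bk\cdot\bx}\,d\bx$ in terms of chains of coordinate subspaces and projections of $\bk$. So neither \cite[Corollary 1.2]{CGT11} is a one-dimensional Selberg-type result, nor is \cite[Corollary/Lemma 2.9]{CGT11} a tensor-product assembly lemma.

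What you describe is rather the classical product construction (as in Cochrane, or Barton--Montgomery--Vaaler): take one-dimensional majorants $b_j^+\ge\chi_{I_j}\ge b_j^-$ of degree $\le M$ with $\widehat{b_j^\pm}(k)=O(1/(|k|+1))$ and $\widehat{b_j^\pm}(0)=|I_j|+O(1/M)$, set $B^+=\prod_j b_j^+$, and obtain $B^-$ by the telescoping correction you wrote down. Your verification that $B^-\le\chi_\Omega\le B^+$ using nonnegativity of $b_j^+$ and of $b_j^+-\chi_{I_j}$, $\chi_{I_j}-b_j^-$ is correct, as is the multiplicative bookkeeping for $\widehat{B^\pm}(\bk)$. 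This is a perfectly valid alternative proof and arguably more elementary, since it avoids the boundary-distance analysis; the paper's route, on the other hand, is a direct specialization of the general Colzani--Gigante--Travaglini machinery and would apply to domains $\Omega$ that are not products. Just be aware that the references you cite do not contain the statements you attribute to them.
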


\begin{proof} 
Let $K$ be a nonnegative function on $\mathbf R^n$ with rapid decay at infinity such that its Fourier transform $\widehat{K}$ satisfies
\begin{align}\label{Ktransform}
\widehat{K}(\mathbf{0})= 1,\quad \widehat{K}(\boldsymbol\xi)\le 1\hbox{ if }\vert\boldsymbol\xi\vert<1,\quad
\widehat{K}(\boldsymbol\xi)=0 \hbox{ if }\vert\boldsymbol\xi\vert\ge 1,
\end{align}
where $\vert\boldsymbol\xi\vert=\sqrt{\xi_1^2+\cdots +\xi_n^2}$ for any $\boldsymbol\xi  =(\xi_1,\cdots, \xi_n)\in \mathbf R^n$.
Put
\begin{align*}    
 K_M(\bx)&:= M^n\sum_{\bk \in {\bZ}^n}  K(M(\bx + \bk)).
\end{align*}
This defines a periodic function which admits the Fourier expansion
\begin{align*}    
 K_M(\bx)&= \sum_{\bk \in {\bZ}^n} \widehat{K}(\bk/M) 
 e^{2\pi i\mathbf k\cdot\mathbf x}.
\end{align*}
According to \eqref{Ktransform}, $ K_M$ is a trigonometric polynomial of degree at most $M$ in each variable.
By \cite[Corollary 1.2]{CGT11} and its proof, one may construct $K$ with the above properties such that
\begin{align}\label{majorminor}
K_M*\chi_\Omega -K_M*H_M \le \chi_\Omega \le K_M*\chi_\Omega +K_M*H_M,
\end{align}
where
\begin{align*}    
H_M(\bx) &=\frac14 \psi(2M {\rm dist}(\bx,\partial E+{\mathbf Z}^n)),\\ 
\psi(t)&=4e^{2\pi}\Big(\int_{|\mathbf x|\le 1}K(\mathbf x)\mathrm d\mathbf x\Big)^{-1}\int_{|\mathbf x|\ge t/2}K(\mathbf x)\,\mathrm d\mathbf x,
\end{align*} 
and $0 \leq \psi(t)\ll_\alpha (1+t)^{-\alpha}$ for any $t\geq0$ and $\alpha>0$. 
It follows that the sums
$$ 
\sum_{\bm \in {\mathbf Z}^n} \psi(M {\rm dist}(\bx,\partial E + \bm)) 
= \sum_{\bm \in {\mathbf Z}^n} \psi(M {\rm dist}(\bx +\bm,\partial E)),
$$ are absolutely convergent. 
We have
$$
0\le    H_M(\bx) = \frac14 \psi(2M {\rm dist}(\bx,\partial E +{\mathbf Z}^n)) \le 
\frac14\sum_{\bm \in {\mathbf Z}^n} \psi( 2M {\rm dist}(\bx,\partial E +\bm)).$$
Let $$H_{\Omega, M}(\bx):= \frac14\sum_{\bm \in {\mathbf Z}^n} \psi( 2M {\rm dist}(\bx,\partial E +\bm)).$$
Since $0\le H_M\le H_{\Omega, M}$ and $K_M\ge 0$, we have $0\le K_M*H_M \le K_M*H_{\Omega, M}$. Take
$$B^\pm:=  K_M*\chi_\Omega \pm K_M*H_{\Omega, M}.$$
The inequalities (\ref{lowerupperbound}) then follow from (\ref{majorminor}).

The trigonometric polynomials $B^\pm$ are explicitly given by 
\begin{align}\label{apm}
B^\pm (\bx) = \sum_{k\in \mathbf Z^n,\,\vert\mathbf k\vert \leq M}  
\widehat{K}(\bk/M) (\widehat{\chi}_\Omega(\bk)\pm \widehat{H}_{\Omega,M}(\bk)) e^{2\pi i \bk\cdot \bx}.
\end{align}
where $\widehat{K}$ satisfies \eqref{Ktransform}, $\widehat{\chi}_\Omega(\bk)$ and $\widehat{H}_{\Omega,M}(\mathbf k)$ 
are the Fourier coefficients
\begin{align*}
\widehat{\chi}_\Omega(\bk)&=\int_{\Omega} e^{-2\pi i\bk\cdot \bx}\,d\bx,\\
\widehat{H}_{\Omega,M} (\bk)
&=\int_{\mathbf T^n}  \frac14 \sum_{\bm \in {\mathbf Z}^n} \psi(2M {\rm dist}(\bx,\partial E+\bm)) e^{-2\pi i\bk\cdot \bx}\,d\bx\\
&=\int_{\mathbf R^n}  \frac14\psi(2M {\rm dist}(\bx,\partial E))e^{-2\pi i \bk\cdot \bx}\,d\bx.
\end{align*}
Note that
\begin{align}\label{evaluate0Fourier}
\widehat{\chi}_\Omega(\bk)&=\int_\Omega e^{-2\pi i\bk\cdot\bx} \mathrm d\mathbf x
=\int_{E} e^{-2\pi i \sum_{j=1}^n k_j x_j}
\mathrm dx_1\cdots \mathrm dx_n\\
&=  \prod_{j=1}^n \int_{I_j} e^{-2\pi i k_j x_j}
\mathrm dx_j \ll\frac{1}{\mathbf N(\mathbf k)}.\nonumber
\end{align} 
It follows that  
$$\widehat{K}(\mathbf 0) \widehat{\chi}_\Omega(\mathbf 0)=\int_\Omega \mathrm d\mathbf x,\quad 
\widehat{K}(\bk/M) \widehat{\chi}_\Omega(\bk)=O\Big(\frac{1}{\bN(\mathbf k)}\Big).
$$
To prove the estimate (\ref{estimateFouriercoefficient}), it suffices to show 
$$
\widehat{H}_{\Omega,M}(\mathbf 0)=O\Big(\frac{1}{M}\Big),\quad 
\widehat{H}_{\Omega,M}(\mathbf k) =O\Big(\frac{1}{\bN(\mathbf k)}\Big)
$$ for any nonzero $\mathbf k\in \mathbf Z^n$ with $\vert\mathbf k\vert \leq M$, and the implied constants are independent of $\Omega$.
By  \cite[Lemma 2.9]{CGT11}, we have
\begin{align*}
 &\bigg| \int_{\mathbf R^n} \psi (2M {\rm dist}(\bx,\partial E)) e^{-2\pi i \mathbf k \cdot \bx}\,d\bx\bigg|\\
\le & c \sum_{j=0}^{n-1} \sum_{A(j)\supset \cdots \supset A(1)} (2M)^{j-n} \prod_{l=1}^j \min \bigg\{\sqrt{n}, \big(2\pi \big\vert P_{A(l)} 
\mathbf k\big\vert\big)^{-1}\bigg\},
\end{align*}
where $c$ is a constant independent of $\Omega$, 
each $A(l)$ is an $l$ dimensional subspace which is an intersection of a number of codimension 1 subspaces 
parallel to the faces of the box $E$,
$P_{A(l)} \mathbf k$ is the orthogonal projection of $\mathbf k$ to $A(l)$, and for $j=0$, the inner sum equals the 
product of $(2M)^{-n}$ and the number of vertices of $E$. Hence the inner sum for $j=0$ is
 \begin{align*}
  (2M)^{-n} 2^n= M^{-n}.
 \end{align*}
 For $1\le j\le n-1$, we have $A(l) ={\rm Span}\{\mathbf e_{i_1},\cdots, \mathbf e_{i_l}\}$ for some $1\leq i_1<\cdots<i_l\leq n$, where 
 $\{\mathbf e_1,\cdots, \mathbf e_n\}$ is the standard basis. 
 Clearly we have
 $$
 \vert P_{A(l)} \mathbf k \vert \ge |k_i|\hbox { for all }i \in \{i_1, \ldots, i_\ell\}.
$$ Hence for each chain $A(j) = {\rm Span}\{e_{i_1},\cdots, e_{i_j}\}\supset \cdots \supset A(1)$, we have
\begin{align*}
(2M)^{j-n} \prod_{l=1}^j\min \bigg\{\sqrt{n}, \big(2\pi \big\vert P_{A(l)}  \mathbf k\big\vert \big)^{-1}\bigg\} 
&\le (2M)^{j-n} \prod_{l=1}^j\big(\sqrt n (|k_{i_l}|+1)^{-1}\big)\\
& \le n^{j/2} \bN(\mathbf k)^{-1},
\end{align*} where for the last inequality we use the fact that $|k_i|+1\leq 2M$ for all $i$. 
For $\mathbf k\neq 0$, we have
\begin{align*}
&\sum_{j=0}^{n-1} \sum_{A(j)\supset \cdots \supset A(1)} (2M)^{j-n} \prod_{l=1}^j \min \bigg\{\sqrt{n}, \big(2\pi \big\vert P_{A(l)} 
\mathbf k\big\vert\big)^{-1}\bigg\} \\
\le& M^{-n}+\sum_{j=1}^{n-1} \frac{n!}{(n-j)!}  n^{j/2}  \bN(\mathbf k)^{-1}
\ll  \bN(\mathbf k)^{-1}.
\end{align*} 
For $\mathbf k=0$, we have
$$
(2M)^{j-n} \prod_{l=1}^j\min \bigg\{\sqrt{n}, (2\pi \big\vert P_{A(l)} 0\big\vert )^{-1}\bigg\} 
= (2M)^{j-n}  n^{j/2} \le n^{j/2}  (2M)^{-1}
$$
since $j\le n-1$, which yields
$$\sum_{j=0}^{n-1} \sum_{A(j)\supset \cdots \supset A(1)} (2M)^{j-n} \prod_{l=1}^j \min \bigg\{\sqrt{n}, \big(2\pi \big\vert P_{A(l)} 
\mathbf k\big\vert\big)^{-1}\bigg\} \ll\frac{1}{M}$$
as desired.
\end{proof}

\section{Proof of Theorem \ref{thm:effectiveWeyl}}

Let $\mu_{{G_{\mathbf R}}}$ and $\mu_T$ be the Haar measures on ${G_{\mathbf R}}$ and $T$, respectively. 
Recall that $\mu_{G_{\mathbf R}^\natural}$ is the 
measure on $G_{\mathbf R}^\natural$ induced by $\mu_{G_{\mathbf R}}$. Let $\mu_{T/W}$ be the measure on $T/W$ 
induced by $\mu_T$.  Elements in the Weyl group $W$ permute 
$\{e({\boldsymbol\alpha})\}_{{\boldsymbol\alpha}\in R}$. So $\prod_{{\boldsymbol\alpha}\in R} (1-e({\boldsymbol\alpha}))$ defines a function 
on $T/W$. 

\begin{proposition}\label{generalST}  Via the homeomorphism
$\kappa: T/W\stackrel\cong\to G_{\mathbf R}^\natural$, the measure $\mu_{G_{\mathbf R}^\natural}$ is identified with the measure
$\big(\frac{1}{|W|} \prod_{{\boldsymbol\alpha}\in R} (1-e({\boldsymbol\alpha})) \big) \mu_{T/W}$ on $T/W$. 
\end{proposition}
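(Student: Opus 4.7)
The plan is to reduce the identification of measures to the classical Weyl integration formula on $G_{\mathbf R}$. Recall from \cite[IV Theorem 1.11]{BtD85} that for any continuous class function $\tilde f$ on $G_{\mathbf R}$,
\[
\int_{G_{\mathbf R}} \tilde f \, d\mu_{G_{\mathbf R}} = \frac{1}{|W|} \int_T \tilde f|_T \cdot |\Delta(t)|^2 \, d\mu_T(t),
\]
where $\Delta := \prod_{\boldsymbol\alpha \in R^+}(1 - e(-\boldsymbol\alpha))$ is the Weyl denominator. Pairing each positive root with its negative and using $(1-z)(1-\bar z) = |1-z|^2$ for $z \in S^1$, we get
\[
|\Delta|^2 = \prod_{\boldsymbol\alpha \in R^+}(1-e(\boldsymbol\alpha))(1-e(-\boldsymbol\alpha)) = \prod_{\boldsymbol\alpha \in R}(1-e(\boldsymbol\alpha)),
\]
which is exactly the weight function appearing in the statement.

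To deduce the proposition, I would test both measures against an arbitrary continuous function $f$ on $G_{\mathbf R}^\natural$. Let $\tilde f$ be its pullback to $G_{\mathbf R}$, a continuous class function. By the definition of $\mu_{G_{\mathbf R}^\natural}$ recalled in the introduction, $\int_{G_{\mathbf R}^\natural} f\, d\mu_{G_{\mathbf R}^\natural} = \int_{G_{\mathbf R}} \tilde f\, d\mu_{G_{\mathbf R}}$, and the Weyl integration formula rewrites the right-hand side as $\tfrac{1}{|W|}\int_T \tilde f|_T \cdot \prod_{\boldsymbol\alpha \in R}(1-e(\boldsymbol\alpha))\, d\mu_T$. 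The integrand is $W$-invariant: the first factor because $\tilde f$ is a class function and $W$ acts on $T$ via conjugation in $G_{\mathbf R}$, and the second because $W$ permutes the roots. Hence the integrand descends, via the quotient map $\pi:T\to T/W$, to a continuous function $F$ on $T/W$, and by the analogous definition of $\mu_{T/W}$ as the measure induced from $\mu_T$,
\[
\int_T \tilde f|_T \cdot \prod_{\boldsymbol\alpha\in R}(1-e(\boldsymbol\alpha))\, d\mu_T = \int_{T/W} F\, d\mu_{T/W}.
\]
Since $\tilde f|_T$ corresponds to $f\circ \kappa$ under the homeomorphism $\kappa:T/W\stackrel{\cong}{\to} G_{\mathbf R}^\natural$, combining the displays gives the claimed identification.

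The only substantive ingredient is the Weyl integration formula itself; everything else is measure-theoretic bookkeeping. The main care needed is to keep the Haar-measure normalizations on $G_{\mathbf R}$ and $T$ consistent with those implicit in the definitions of $\mu_{G_{\mathbf R}^\natural}$ and $\mu_{T/W}$, and to verify that $W$-invariance of the integrand together with the compatible definition of $\mu_{T/W}$ does \emph{not} introduce an extra factor of $|W|$—which is correct because the push-forward convention used to define $\mu_{T/W}$ integrates a $W$-invariant function on $T$ to the same value as its descent on $T/W$.
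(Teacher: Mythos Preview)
Your proof is correct and follows essentially the same route as the paper: test against an arbitrary continuous function on $G_{\mathbf R}^\natural$, pull back to a class function on $G_{\mathbf R}$, apply the Weyl integration formula \cite[IV 1.11, VI 1.8]{BtD85}, and descend the $W$-invariant integrand to $T/W$. The paper's version is terser (it writes the chain of equalities directly without the discussion of $|\Delta|^2$ or normalization), but the argument is the same.
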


\begin{proof} Let $\pi: G_{\mathbf R}\to G_{\mathbf R}^\natural$ be the canonical map. For any
integrable function $f$ on $G_{\mathbf R}^\natural$, by the Weyl integration formula \cite[IV 1.11, VI 1.8]{BtD85},
we have 
\begin{align*}
\int_{G_{\mathbf R}^\natural} f \mathrm{d} \mu_{G_{\mathbf R}^\natural} &=\int_{G_{\mathbf R}} f\circ\pi \, \mathrm{d} 
\mu_{G_{\mathbf R}}=\frac{1}{|W|}\int_T (f\circ \pi |_T) \prod_{{\boldsymbol\alpha}\in R} (1-e({\boldsymbol\alpha}))\, \mathrm d\mu_T\\
&=\frac{1}{|W|}\int_{T/W} (f\circ \kappa) \prod_{{\boldsymbol\alpha}\in R} (1-e({\boldsymbol\alpha}))\, \mathrm d\mu_{T/W}.
\end{align*}
This completes the proof.
\end{proof} 

\begin{proposition} The set $\{\frac{A_{{\boldsymbol\lambda}+{\boldsymbol\rho}}}{A_{\boldsymbol\rho}}:
{\boldsymbol\lambda}\in I^*\cap \mathfrak C\}$ is an orthonormal basis for the Hilbert space 
$L^2(T/W, (\frac{1}{|W|} \prod_{{\boldsymbol\alpha}\in R} (1-e({\boldsymbol\alpha})))\mu_{T/W}).$ 
\end{proposition}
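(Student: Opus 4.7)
The plan is to identify each $A_{\boldsymbol\lambda+\boldsymbol\rho}/A_{\boldsymbol\rho}$ with the character of an irreducible representation of $G_{\mathbf R}$ via the Weyl character formula, and then to combine the Peter--Weyl theorem with Proposition \ref{generalST} to conclude.

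First I would invoke the Weyl character formula (\cite[VI 1.7]{BtD85}): for each $\boldsymbol\lambda\in I^*\cap\mathfrak C$, the $W$-invariant function $A_{\boldsymbol\lambda+\boldsymbol\rho}/A_{\boldsymbol\rho}$ on $T$ equals the restriction to $T$ of the character $\chi_{\boldsymbol\lambda}$ of the irreducible complex representation $V_{\boldsymbol\lambda}$ of $G_{\mathbf R}$ of highest weight $\boldsymbol\lambda$. The highest-weight theorem then sets up a bijection between $I^*\cap\mathfrak C$ and isomorphism classes of irreducible representations of $G_{\mathbf R}$. The use of the integral-form lattice $I^*$ rather than the full weight lattice $\Lambda$ is crucial here: the weights of representations of the compact group $G_{\mathbf R}$ (equivalently, of the algebraic group $G$) are exactly those $\boldsymbol\lambda\in\mathfrak h_{\mathbf R}^*$ that exponentiate to characters of $T=\mathfrak h_{\mathbf R}/\Gamma$, namely the elements of $I^*$.

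Next, Schur orthogonality asserts that the characters $\{\chi_{\boldsymbol\lambda}\}_{\boldsymbol\lambda\in I^*\cap\mathfrak C}$ form an orthonormal family in $L^2(G_{\mathbf R},\mu_{G_{\mathbf R}})$, and the Peter--Weyl theorem implies that their linear span is dense in the subspace of class functions, i.e.\ in $L^2(G_{\mathbf R}^\natural,\mu_{G_{\mathbf R}^\natural})$. Transporting this orthonormal basis through the homeomorphism $\kappa:T/W\stackrel\cong\to G_{\mathbf R}^\natural$ and using Proposition \ref{generalST} to identify $\mu_{G_{\mathbf R}^\natural}$ with $\bigl(\tfrac{1}{|W|}\prod_{\boldsymbol\alpha\in R}(1-e(\boldsymbol\alpha))\bigr)\mu_{T/W}$ yields the claim. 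There is no serious obstacle: the argument is a routine packaging of classical compact-group harmonic analysis, and the only points requiring attention are the precise parametrization of the unitary dual of $G_{\mathbf R}$ by $I^*\cap\mathfrak C$ (rather than by $\Lambda\cap\mathfrak C$, which would instead parametrize representations of the universal cover) and the measure-theoretic bookkeeping supplied by Proposition \ref{generalST}.
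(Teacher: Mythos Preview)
Your proposal is correct and follows essentially the same route as the paper: identify $A_{\boldsymbol\lambda+\boldsymbol\rho}/A_{\boldsymbol\rho}$ with the irreducible character $\chi_{\boldsymbol\lambda}$ via the Weyl character formula, invoke the highest-weight classification to see that $I^*\cap\mathfrak C$ parametrizes the irreducible representations of $G_{\mathbf R}$, and then combine Peter--Weyl with Proposition~\ref{generalST}. The paper's proof differs only in the order of presentation and in citing specific references in \cite{BtD85} for each step.
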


\begin{proof}
By \cite[Theorem VI 1.7 (ii)]{BtD85}, any irreducible representation of $G_{\mathbf R}$ has its highest weight lying in $I^*\cap\mathfrak C$,
and for any ${\boldsymbol\lambda}\in I^*\cap\mathfrak C$, there exists an irreducible representation 
$\Gamma_{\boldsymbol\lambda}$ of $G_{\mathbf R}$
unique up to isomorphism with the highest weight ${\boldsymbol\lambda}$. 
By the Peter-Weyl theorem (\cite[Theorem III 3.1]{BtD85}), the characters of 
$\Gamma_{\boldsymbol\lambda}$ $({\boldsymbol\lambda}\in I^*\cap\mathfrak C)$
form an orthonormal basis 
of $L^2(G_{\mathbf R}^\natural, \mu_{G_{\mathbf R}^\natural})$.
Identify $(G_{\mathbf R}^\natural, \mu_{G_{\mathbf R}^\natural})$ with 
$(T/W, (\frac{1}{|W|} \prod_{{\boldsymbol\alpha}\in R} (1-e({\boldsymbol\alpha})) )\mu_{T/W})$. By the Weyl character formula 
\cite[VI 1.7 (ii)-(iii)]{BtD85}, the function on 
$T/W$ defined by the character of $\Gamma_{\boldsymbol\lambda}$ is exactly 
$\frac{A_{{\boldsymbol\lambda}+{\boldsymbol\rho}}}{A_{\boldsymbol\rho}}.$
\end{proof}

 Let $\Vert\cdot \Vert$ be the norm on $\mathfrak h^*_{\mathbf R}$ as chosen in \eqref{eq:norm}.

\begin{lemma}\label{lemma:technical2}  
For any
$\boldsymbol\lambda=\sum_{j=1}^n\lambda_j\mathbf e^*_j,$
there exists a constant $c>0$ such that for any positive integer $L$ and any $\sigma\in W$, if $\max_j|\lambda_j|\leq L$, then 
$\Vert \sigma(\boldsymbol\lambda)\Vert\leq cL$.
\end{lemma}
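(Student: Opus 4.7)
The plan is to reduce the lemma to two elementary facts: the equivalence of all norms on the finite-dimensional real vector space $\mathfrak h_{\mathbf R}^*$, and the finiteness of the Weyl group $W$. Neither the Killing form nor any special structure of $W$ actually needs to be used.

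First I would introduce the $\ell^\infty$-norm associated with the fixed dual basis, namely $\Vert \boldsymbol\mu\Vert_\infty := \max_j |\mu_j|$ for $\boldsymbol\mu = \sum_j \mu_j \mathbf e_j^*$. Since $\mathfrak h_{\mathbf R}^*$ is finite-dimensional, any two norms on it are equivalent, so there is a constant $C_1 > 0$, depending only on the chosen basis and on the norm $\Vert\cdot\Vert$, such that $\Vert\boldsymbol\mu\Vert \le C_1 \Vert\boldsymbol\mu\Vert_\infty$ for all $\boldsymbol\mu\in\mathfrak h_{\mathbf R}^*$.

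Next I would observe that each $\sigma \in W$ is a linear automorphism of $\mathfrak h_{\mathbf R}^*$ and therefore has a finite operator norm $\Vert\sigma\Vert_{\mathrm{op}}$ with respect to $\Vert\cdot\Vert$. Because $W$ is a finite group, the maximum $C_2 := \max_{\sigma\in W}\Vert\sigma\Vert_{\mathrm{op}}$ exists and depends only on $G$ and the choice of $\Vert\cdot\Vert$. Composing the two estimates, whenever $\max_j|\lambda_j|\le L$ we have
\[
\Vert\sigma(\boldsymbol\lambda)\Vert \le C_2\Vert\boldsymbol\lambda\Vert \le C_1 C_2 \Vert\boldsymbol\lambda\Vert_\infty \le C_1 C_2\, L,
\]
so that $c := C_1 C_2$ works simultaneously for every $\boldsymbol\lambda$, every positive integer $L$, and every $\sigma\in W$.

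There is no genuine obstacle here; the lemma is a purely soft statement whose only role, presumably, will be to translate hypotheses expressed in the coordinate basis $\{\mathbf e_j^*\}$ into bounds in the abstractly chosen norm $\Vert\cdot\Vert$ used throughout Theorem \ref{thm:effectiveWeyl} and in particular to control $\Vert\sigma(\boldsymbol\lambda)\Vert$ uniformly in $\sigma$ when partitioning the summation set $\{\boldsymbol\lambda\in I^*\cap\mathfrak C:\Vert\boldsymbol\lambda\Vert\le M\}$ according to coordinate bounds.
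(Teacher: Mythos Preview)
Your proof is correct and follows essentially the same approach as the paper's. The paper's one-line argument applies the triangle inequality directly to $\sigma(\boldsymbol\lambda)=\sum_j\lambda_j\sigma(\mathbf e_j^*)$ and takes $c=\max_{\sigma}\sum_{j}\Vert\sigma(\mathbf e_j^*)\Vert$; your version packages the same two ingredients (finite-dimensionality and finiteness of $W$) slightly more abstractly via norm equivalence and operator norms, but the content is identical.
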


\begin{proof}
It suffices to take 
$c=\max_{i,\sigma}\sum_{j=1}^n \Vert \sigma(e^*_i)\Vert$.
\end{proof}

For any $\boldsymbol\lambda=\sum_{j=1}^n\lambda_j\mathbf e^*_j\in \mathfrak h^*_{\mathbf R}$, recall that 
$${\mathbf  N}(\boldsymbol\lambda)=(|\lambda_1|+1)\cdots(|\lambda_n|+1)$$
as defined in \eqref{eq:N-supnorm}. 

\begin{lemma}\label {lemma:technical1} Let  $\boldsymbol\lambda, \boldsymbol\lambda'\in\mathfrak h^*_{\mathbf R}$.
We have $${\mathbf N}(\boldsymbol\lambda + \boldsymbol\lambda') \le{\mathbf N}(\boldsymbol\lambda) {\mathbf N}(\boldsymbol\lambda'),
\quad {\mathbf N}(\boldsymbol\lambda + \boldsymbol\lambda')^{-1} \ll_{\boldsymbol\lambda'}  {\mathbf N}(\lambda)^{-1}.
$$
\end{lemma}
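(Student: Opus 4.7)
The plan is to reduce everything to a one-variable inequality. Write $\boldsymbol\lambda=\sum_j\lambda_j\mathbf e^*_j$ and $\boldsymbol\lambda'=\sum_j\lambda'_j\mathbf e^*_j$, so that
\[
\mathbf N(\boldsymbol\lambda+\boldsymbol\lambda')=\prod_{j=1}^n\bigl(|\lambda_j+\lambda'_j|+1\bigr).
\]
Since $\mathbf N$ factors over coordinates, the first inequality will follow once I establish, for any two real numbers $a,b$, the elementary bound
\[
|a+b|+1\le (|a|+1)(|b|+1).
\]
This is immediate from the triangle inequality together with $|a||b|\ge 0$: expanding the right-hand side gives $|a||b|+|a|+|b|+1\ge |a+b|+1$. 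Taking the product over $j=1,\ldots,n$ yields $\mathbf N(\boldsymbol\lambda+\boldsymbol\lambda')\le \mathbf N(\boldsymbol\lambda)\mathbf N(\boldsymbol\lambda')$.

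For the second inequality I apply the first one to the decomposition $\boldsymbol\lambda=(\boldsymbol\lambda+\boldsymbol\lambda')+(-\boldsymbol\lambda')$ and use the obvious symmetry $\mathbf N(-\boldsymbol\lambda')=\mathbf N(\boldsymbol\lambda')$ to obtain
\[
\mathbf N(\boldsymbol\lambda)\le \mathbf N(\boldsymbol\lambda+\boldsymbol\lambda')\,\mathbf N(\boldsymbol\lambda').
\]
Rearranging gives $\mathbf N(\boldsymbol\lambda+\boldsymbol\lambda')^{-1}\le \mathbf N(\boldsymbol\lambda')\,\mathbf N(\boldsymbol\lambda)^{-1}$, and since $\mathbf N(\boldsymbol\lambda')$ is a constant depending only on $\boldsymbol\lambda'$, this is precisely the claimed bound $\mathbf N(\boldsymbol\lambda+\boldsymbol\lambda')^{-1}\ll_{\boldsymbol\lambda'}\mathbf N(\boldsymbol\lambda)^{-1}$.

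There is no real obstacle here; the statement is essentially a quasi-submultiplicativity of the weight function $\mathbf N$ under addition. The only thing to be careful about is that the second inequality must be deduced from the first (and not the other way around), which explains why the implied constant is allowed to depend on $\boldsymbol\lambda'$: the role of the two variables is asymmetric in the second bound.
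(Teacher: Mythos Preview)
Your proof is correct and follows essentially the same route as the paper: the paper also reduces to the coordinate-wise inequality $1+|\lambda_i+\lambda_i'|\le (1+|\lambda_i|)(1+|\lambda_i'|)$ via the triangle inequality, and then obtains the second assertion by substituting $(\boldsymbol\lambda+\boldsymbol\lambda',-\boldsymbol\lambda')$ for $(\boldsymbol\lambda,\boldsymbol\lambda')$.
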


\begin{proof} We have
\begin{align*}
{\mathbf N}(\boldsymbol\lambda + \boldsymbol\lambda')=\prod_{i=1}^n (1+|\lambda_i+\lambda_i'|)
\leq\prod_{i=1}^n\big( 1+|\lambda_i| + |\lambda_i'|\big)
\leq {\mathbf N}(\boldsymbol\lambda){\mathbf N}(\boldsymbol\lambda').
\end{align*} 
Replacing the pair $(\boldsymbol\lambda, \blambda')$ by $(\boldsymbol\lambda+ \boldsymbol\lambda', -\boldsymbol\lambda')$, 
we get the second assertion.
\end{proof}

\begin{lemma}\label{lm:Fourier}
Let $I_j$ $(j=1, \ldots, n)$ be intervals of lengths $<1$, and let $\Omega$ be the image of the set 
$\{\sum_{j=1}^n t_j \mathbf e_j:\, t_j\in I_j\}$ under the composite
$$\mathfrak h_{\mathbf R}\to\mathfrak h_{\mathbf R}/ \sum_{j=1}^n \mathbf Z \mathbf e_j
\underset{\cong}{\stackrel{\exp(2\pi i \hbox{-})}\to} T.$$
We have 
\begin{align*}
\int_\Omega e(\boldsymbol\lambda)\mathrm d \mu_T \ll \frac{1}{{\mathbf  N}(\boldsymbol \lambda)},
\end{align*} where 
the constant implied by $\ll$ depends only on $n$. 
\end{lemma}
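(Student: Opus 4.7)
The plan is to reduce the integral over $\Omega$ to a product of one-dimensional Fourier integrals by using the basis $\{\mathbf e_1, \ldots, \mathbf e_n\}$ of $\Gamma$ to set up explicit coordinates on $T$. First, since $\{\mathbf e_j\}$ spans the integral lattice, the isomorphism $\exp(2\pi i \cdot) \colon \mathfrak h_\mathbf R/\Gamma \xrightarrow{\cong} T$ supplies coordinates $(t_1, \ldots, t_n)$ on $T$ via $H = \sum_j t_j \mathbf e_j$, under which $\mu_T$ becomes a constant (depending only on the normalization) times Lebesgue measure $dt_1 \cdots dt_n$ on the unit cube. Because each $I_j$ has length strictly less than $1$, the set $I_1 \times \cdots \times I_n$ injects into $\mathbf R^n/\mathbf Z^n$, so the integral over $\Omega$ reduces to an ordinary integral over the Euclidean box $\prod_j I_j$.

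Next, I would unpack the character $e(\boldsymbol\lambda)$ in these coordinates. For $e(\boldsymbol\lambda)$ to be a well-defined character of $T$ one needs $\boldsymbol\lambda \in I^*$, which forces $\lambda_j = \boldsymbol\lambda(\mathbf e_j) \in \mathbf Z$ for each $j$. Writing $\boldsymbol\lambda = \sum_j \lambda_j \mathbf e_j^*$, the character is then $e^{2\pi i \sum_j \lambda_j t_j}$, and the integral factorizes into $\prod_{j=1}^n \int_{I_j} e^{2\pi i \lambda_j t_j}\, dt_j$.

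The computation closes with the standard one-variable estimate $|\int_{I_j} e^{2\pi i \lambda_j t_j}\, dt_j| \leq \min(|I_j|, 1/(\pi |\lambda_j|)) \ll 1/(|\lambda_j|+1)$ for any $\lambda_j \in \mathbf Z$, where the case $\lambda_j = 0$ is handled by $|I_j| \leq 1$. Multiplying the bounds across $j = 1, \ldots, n$ gives $\ll \mathbf N(\boldsymbol\lambda)^{-1}$. There is no real obstacle here: the argument is essentially bookkeeping, and the only point worth double-checking is that the implied constant depends only on $n$, which is the case because the normalization of $\mu_T$ relative to $dt_1 \cdots dt_n$ is fixed once the lattice basis $\{\mathbf e_j\}$ has been chosen, independent of the intervals $I_j$ and the weight $\boldsymbol\lambda$.
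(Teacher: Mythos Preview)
Your proposal is correct and follows essentially the same approach as the paper: identify $T$ with $\mathbf T^n$ via the basis $\{\mathbf e_1,\ldots,\mathbf e_n\}$ of $\Gamma$, factorize the integral over the box into one-dimensional integrals, and apply the elementary bound $\big|\int_{I_j} e^{2\pi i \lambda_j t_j}\,dt_j\big|\ll (|\lambda_j|+1)^{-1}$. The paper's proof is simply a pointer to an earlier displayed computation doing exactly this, so you have spelled out what the paper leaves implicit.
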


\begin{proof} Via the basis $\{e_1,\ldots, e_n\}$ of $\Gamma$, we may identify $\mathbf T^n=\mathbf R^n/\mathbf Z^n$ with 
$\mathfrak h_{\mathbf R}/\Gamma$ and with $T$. Our assertion follows from (\ref{evaluate0Fourier}).
\end{proof}

\begin{lemma}\label{lemma:approx} Let $D$ be a small box in $T/W$, and let $M$ be a positive integer.  
There exists two linear combinations 
$$S^{\pm}=\sum_{\substack{\blambda\in I^*\cap\mathfrak C\\ \Vert\blambda\Vert\leq M}}
\widetilde S^{\pm}({\boldsymbol\lambda})
\frac{A_{{\boldsymbol\lambda}+{\boldsymbol\rho}}}{A_{\boldsymbol\rho}}$$
with $\widetilde S^{\pm}({\boldsymbol\lambda})$ being complex numbers such that the following conditions hold:
\begin{enumerate}[(i)]
\item $S^\pm$ are real valued functions on $T/W$, and 
$$S^-\leq \chi_D\leq S^+$$ everywhere on $T/W$, 
where $\chi_D$ is the characteristic function of $D$.
\item $$\widetilde S^{\pm}(\mathbf 0)=\int_D  \frac{1}{|W|} 
\prod_{{\boldsymbol\alpha}\in R} (1-e({\boldsymbol\alpha}))\, \mathrm{d}\mu_{T/W}+O\Big(\frac{1}{M}\Big).$$ 
\item For any nonzero ${\boldsymbol\lambda}\in I^*\cap\mathfrak C$ with $\Vert\blambda\Vert\leq M,$ we have
$$\widetilde S^{\pm}({\blambda})\ll \sum_{\sigma\in W} \frac{1}{{\mathbf N}(\sigma({\blambda}))}.$$
\end{enumerate}
Moreover, the implied constants in (ii)-(iii) are independent of $D$. 
\end{lemma}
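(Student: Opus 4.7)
The plan is to lift $D$ to a small box $\Omega\subset T$, apply Proposition \ref{thCGT} on the torus $T\cong\bR^n/\bZ^n$ (identified via the basis $\{\mathbf e_1,\ldots,\mathbf e_n\}$ of $\Gamma$) to obtain a trigonometric sandwich of $\chi_\Omega$, Weyl-symmetrize this sandwich to obtain one for $\chi_D$ on $T/W$, and finally convert the resulting Fourier expansion on $T$ into an expansion in the orthonormal basis $\{\chi_\blambda:=A_{\blambda+\boldsymbol\rho}/A_{\boldsymbol\rho}\}_{\blambda\in I^*\cap\mathfrak C}$ via the Weyl denominator trick.

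Fix a constant $c_G>0$ to be chosen later and set $M'=\lfloor M/c_G\rfloor$. Applying Proposition \ref{thCGT} with parameter $M'$ to the lift $\Omega$ of $D$ produces real-valued trigonometric polynomials $B^\pm$ on $T$, supported in Fourier modes $\mathbf k\in\bZ^n$ with $|k_j|\le M'$, satisfying $B^-\le\chi_\Omega\le B^+$, $\widehat B^\pm(\mathbf 0)=\mu_T(\Omega)+O(1/M')$, and $\widehat B^\pm(\mathbf k)\ll 1/\bN(\mathbf k)$. Identifying $\mathbf k\in\bZ^n$ with $\mu=\sum_jk_j\mathbf e^*_j\in I^*$, define the Weyl symmetrization
\[
S^\pm(x):=\sum_{\sigma\in W}B^\pm(\sigma^{-1}x),
\]
which is real-valued and $W$-invariant, hence descends to $T/W$. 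Because $D$ is small, the preimage of $D$ in $T$ is the disjoint union $\bigsqcup_{\sigma\in W}\sigma(\Omega)$; combined with the identity $\chi_\Omega(\sigma^{-1}x)=\chi_{\sigma(\Omega)}(x)$ and the pointwise sandwich $B^-\le\chi_\Omega\le B^+$, summing over $\sigma$ yields $S^-\le\chi_D\le S^+$ on $T/W$, proving (i).

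To obtain the expansion $S^\pm=\sum_{\blambda}\widetilde S^\pm(\blambda)\chi_\blambda$, multiply by the Weyl denominator to get $S^\pm A_{\boldsymbol\rho}=\sum_\blambda\widetilde S^\pm(\blambda)A_{\blambda+\boldsymbol\rho}$. Strict dominance of $\blambda+\boldsymbol\rho$ for every $\blambda\in I^*\cap\mathfrak C$, together with the trivial $W$-stabilizer of $\boldsymbol\rho$, makes the coefficient of $e(\blambda+\boldsymbol\rho)$ in $A_{\blambda'+\boldsymbol\rho}$ equal to $\delta_{\blambda,\blambda'}$ for $\blambda'\in I^*\cap\mathfrak C$. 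Extracting this coefficient on both sides yields
\[
\widetilde S^\pm(\blambda)=\sum_{\sigma,\tau\in W}\mathrm{sgn}(\tau)\,\widehat B^\pm\!\big(\sigma^{-1}(\blambda+\boldsymbol\rho-\tau\boldsymbol\rho)\big);
\]
since $\boldsymbol\rho-\tau\boldsymbol\rho$ is a sum of roots and hence lies in $I^*$, the arguments of $\widehat B^\pm$ lie in $I^*\cong\bZ^n$. Lemma \ref{lemma:technical2} together with the Fourier support of $B^\pm$ forces $\widetilde S^\pm(\blambda)=0$ whenever $\|\blambda\|>c_GM'+O(1)$, so choosing $c_G$ (depending only on $G$) large enough confines the nonzero terms to $\|\blambda\|\le M$, as required.

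For (ii), $\widetilde S^\pm(\mathbf 0)$ is the $L^2$-inner product of $S^\pm$ with $\chi_0=1$ against $\tfrac1{|W|}\prod_{\boldsymbol\alpha}(1-e(\boldsymbol\alpha))\,d\mu_{T/W}$; lifting to $T$ and using $W$-invariance of $\prod_{\boldsymbol\alpha}(1-e(\boldsymbol\alpha))=|A_{\boldsymbol\rho}|^2$ collapses this integral to $\int_T B^\pm\prod_{\boldsymbol\alpha}(1-e(\boldsymbol\alpha))\,d\mu_T$. Proposition \ref{generalST} identifies $\mu_{G^\natural_{\mathbf R}}(D)$ with $\int_\Omega\prod_{\boldsymbol\alpha}(1-e(\boldsymbol\alpha))\,d\mu_T$, while the inequality $|B^\pm-\chi_\Omega|\le B^+-B^-$ combined with the uniform bound $\|\prod_{\boldsymbol\alpha}(1-e(\boldsymbol\alpha))\|_\infty=O_G(1)$ bounds the discrepancy by $\widehat B^+(\mathbf 0)-\widehat B^-(\mathbf 0)=O(1/M')=O(1/M)$. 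For (iii), when $\blambda\ne\mathbf 0$ the strict dominance of $\blambda+\boldsymbol\rho$ and regularity of $\boldsymbol\rho$ preclude $\blambda+\boldsymbol\rho-\tau\boldsymbol\rho=\mathbf 0$ for any $\tau\in W$, so each term in the direct formula is $\ll\bN(\sigma^{-1}(\blambda+\boldsymbol\rho-\tau\boldsymbol\rho))^{-1}$; invoking Lemma \ref{lemma:technical1} with the finite family of fixed shifts $\sigma^{-1}(\boldsymbol\rho-\tau\boldsymbol\rho)$ upgrades this bound to $\ll\bN(\sigma\blambda)^{-1}$, and summing over $\sigma,\tau\in W$ yields (iii). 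The main delicate step is this Weyl denominator re-expansion: multiplying by $A_{\boldsymbol\rho}$ is precisely what turns the abstract orthonormal-basis coefficients into readable Fourier coefficients of $S^\pm A_{\boldsymbol\rho}$, and Lemma \ref{lemma:technical1} is the right tool to absorb the bounded Weyl-shifts of $\boldsymbol\rho$ introduced in the process.
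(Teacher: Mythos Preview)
Your proof is correct and follows essentially the same path as the paper: lift $D$ to $\Omega$, apply Proposition~\ref{thCGT} with a rescaled parameter, Weyl-symmetrize to $S^\pm=\sum_{\sigma\in W}\sigma(B^\pm)$, and derive the explicit formula $\widetilde S^\pm(\blambda)=\sum_{\sigma,\tau\in W}\mathrm{sgn}(\sigma\tau)\,\widehat B^\pm(\sigma\blambda+\sigma\boldsymbol\rho-\tau\boldsymbol\rho)$ (equivalent to yours after reindexing $\sigma\mapsto\sigma^{-1}$, $\tau\mapsto\sigma\tau$) for use with Lemmas~\ref{lemma:technical2} and~\ref{lemma:technical1}. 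The only cosmetic difference is that the paper obtains this formula by evaluating the inner product $\widetilde S^\pm(\blambda)=\int_T B^\pm\,\overline{A_{\blambda+\boldsymbol\rho}}\,A_{\boldsymbol\rho}\,\mathrm d\mu_T$, whereas you read it off as the Fourier coefficient of $e(\blambda+\boldsymbol\rho)$ in $S^\pm A_{\boldsymbol\rho}$.
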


\begin{proof} The lattice of integral forms $I^*$ can be identified with the group of characters of $T$. Let 
$R(G_{\mathbf R})$ be the Grothendieck
ring of representations of $G_{\mathbf R}$. The homomorphism 
$$\mathrm{char}: R(G_{\mathbf R})\to \mathbf Z[I^*]$$ sending
each representation $\rho$ of $G_{\mathbf R}$ to the character of $\rho|_T$ induces an isomorphism 
$$R(G_{\mathbf R})\cong \mathbf Z[I^*]^W.$$
For any $L^2$ function $f$ on $T$, we have the Fourier expansion
$$f=\sum_{\boldsymbol\lambda\in I^*} \widehat f(\boldsymbol\lambda) e(\boldsymbol\lambda),$$
where 
$$\widehat f(\boldsymbol\lambda)=\int_T f e(-\boldsymbol\lambda) \mathrm d \mu_T.$$
Choose intervals  $I_j$ $(j=1, \ldots, n)$ of lengths $<1$ so that $D\subset T/W$ is the image of the set 
$$\Omega=\Big\{\exp\Big(2\pi i \sum_{j=1}^n t_j \mathbf e_j\Big):\, t_j\in I_j\Big\}$$ 
under the projection $T\to T/W$, and  $\Omega\cap \sigma(\Omega)=\emptyset$ for every
nonidentity element $\sigma$ in $W$. Let $L=[M/c]$, where $c$ is the constant in Lemma \ref{lemma:technical2}. 

By Theorem \ref{thCGT}, there exist two trigonometric polynomials 
\begin{eqnarray*}
B^\pm=\sum_{\substack{\boldsymbol\lambda\in I^*\\\max_j|\lambda_j| \leq L} }
\widehat B^{\pm}(\boldsymbol\lambda) e(\boldsymbol\lambda)
\end{eqnarray*}
such that 
\begin{eqnarray}\label{property1}
B^-\leq \chi_\Omega\leq B^+,
\end{eqnarray}
\begin{eqnarray}\label{property2} 
\widehat B^{\pm}(\boldsymbol0)=\widehat \chi_\Omega(\boldsymbol0)  + O\big(L^{-1}\big),
\quad \widehat B^{\pm}(\boldsymbol\lambda)= O\Big(\frac{1}{{\mathbf N}(\boldsymbol\lambda)}\Big),
\end{eqnarray} for all nonzero $\boldsymbol\lambda\in I^*$, 
where $\widehat{\chi}_\Omega(\mathbf 0) =\int_\Omega \mathrm d\mu_T$ and 
the constants implied by $O$ are independent of $\Omega$.  
Set
$$S^\pm= \sum_{\sigma\in W} \sigma(B^\pm).$$ 
Then $S^\pm$ are functions on $T$ invariant under $W$. Hence
they define continuous functions on $T/W$ which we denote also by $S^\pm$. 
The inverse image of $D$ under the projection 
$T\to T/W$ is $\bigsqcup_{\sigma\in W} \sigma(\Omega)$.
By (\ref{property1}), we have 
$$S^-\leq \chi_D\leq S^+$$ as functions on $T/W$.  
We have $S^{\pm}\in \mathbf R[I^*]^W$. So we can expand $S^{\pm}$ as a finite linear combination of the 
orthonormal basis 
$\{\frac{A_{{\boldsymbol\lambda}+\boldsymbol\rho}}{A_{\boldsymbol\rho}}:\,{\boldsymbol\lambda}\in I^*\cap\mathfrak C\}$ 
of $L^2(T/W,(\frac{1}{|W|} \prod_{{\boldsymbol\alpha}\in R} (1-e({\boldsymbol\alpha})))\mu_{T/W})$. We 
can write
\begin{eqnarray}\label{expansionS}
S^{\pm}=\sum_{\substack{\boldsymbol\lambda\in I^*\cap\mathfrak C \\ \Vert\blambda\Vert\leq M}} \widetilde S^{\pm}({\boldsymbol\lambda})
\frac{A_{{\boldsymbol\lambda}+{\boldsymbol\rho}}}{A_{\boldsymbol\rho}},
\end{eqnarray}
where 
$$\widetilde S^{\pm}({\boldsymbol\lambda})=\frac{1}{|W| } \int_T S^{\pm} 
\overline{\Big(\frac{A_{{\boldsymbol\lambda}+
{\boldsymbol\rho}}}{A_{\boldsymbol\rho}}\Big)} \prod_{{\boldsymbol\alpha}\in R} (1-e({\boldsymbol\alpha}))\, \mathrm{d}\mu_T.$$
In the expansion (\ref{expansionS}), the summation goes over those $\boldsymbol\lambda\in I^*\cap\mathfrak C$ with 
$\Vert\blambda\Vert\leq M$. This follows from Lemma \ref{lemma:technical2}, and the fact that in the expansion 
$$S^\pm= \sum_{\sigma\in W} \sigma(B^\pm)=\sum_{\sigma\in W}\sum_{\substack{\boldsymbol\lambda\in I^*\\ \max_j|\lambda_j|\leq L} }
\widehat B^{\pm}(\boldsymbol\lambda) e(\sigma(\boldsymbol\lambda)),$$ the inner summation 
goes over those $\boldsymbol\lambda\in I^*$ with $\max_j|\lambda_j|\leq L$.

By \cite[1.7 (iii)]{BtD85}, we have 
$$A_{\boldsymbol\rho}=e({\boldsymbol\rho})\prod_{{\boldsymbol\alpha}\in R^+}(1-e(-{\boldsymbol\alpha})).$$
So we have 
$$A_{\boldsymbol\rho}\overline{A_{\boldsymbol\rho}}=\prod_{{\boldsymbol\alpha}\in R} (1-e({\boldsymbol\alpha})),\quad
\overline{\Big(\frac{A_{{\boldsymbol\lambda}+{\boldsymbol\rho}}}
{A_{\boldsymbol\rho}}\Big)}\prod_{{\boldsymbol\alpha}\in R} (1-e({\boldsymbol\alpha}))
=\overline{A_{{\boldsymbol\lambda}+{\boldsymbol\rho}}}  A_{\boldsymbol\rho}.$$
Using the fact that 
$\overline{(\frac{A_{{\boldsymbol\lambda}+{\boldsymbol\rho}}}{A_{\boldsymbol\rho}})} 
\prod_{{\boldsymbol\alpha}\in R} (1-e({\boldsymbol\alpha}))\, \mathrm{d}\mu_T$
is $W$-invariant on $T$, we calculate $\widetilde S^{\pm}({\boldsymbol\lambda})$ as follows:
\begin{align}\label{evaluateS}
\widetilde S^{\pm}({\boldsymbol\lambda})
&=\frac{1}{|W| } \int_T S^{\pm} 
\overline{\Big(\frac{A_{{\boldsymbol\lambda}+{\boldsymbol\rho}}}{A_{\boldsymbol\rho}}\Big)} \prod_{{\boldsymbol\alpha}\in R} (1-e({\boldsymbol\alpha}))\, \mathrm{d}\mu_T \\
&= \frac{1}{|W| }\sum_{\sigma\in W}\int_T \sigma(B^\pm)
\overline{\Big(\frac{A_{{\boldsymbol\lambda}+{\boldsymbol\rho}}}{A_{\boldsymbol\rho}}\Big)}\prod_{{\boldsymbol\alpha}\in R} (1-e({\boldsymbol\alpha}))\, \mathrm{d}\mu_T\nonumber \\
&= \int_T B^\pm \overline{\Big(\frac{A_{{\boldsymbol\lambda}+{\boldsymbol\rho}}}{A_{\boldsymbol\rho}}\Big)}\prod_{{\boldsymbol\alpha}\in R} (1-e({\boldsymbol\alpha}))\, \mathrm{d}\mu_T \nonumber\\
&= \int_T B^\pm \overline{A_{{\boldsymbol\lambda}+{\boldsymbol\rho}}}  A_{\boldsymbol\rho}\, \mathrm{d}\mu_T \nonumber\\
&= \sum_{\sigma, \tau\in W} \mathrm{sgn}(\sigma\tau) \int_T
B^\pm  e(\tau({\boldsymbol\rho})-\sigma(\boldsymbol\rho)-\sigma({\boldsymbol\lambda}))\, 
\mathrm{d}\mu_T\nonumber\\
&= \sum_{\sigma, \tau\in W} \mathrm{sgn}(\sigma\tau) \cdot \widehat B^\pm  \big(\sigma({\boldsymbol\lambda}) +\sigma(\boldsymbol\rho) -\tau({\boldsymbol\rho})\big).\nonumber
\end{align} 
For any nonzero ${\boldsymbol\lambda}\in I^*\cap\mathfrak C$, we claim that 
$\sigma({\boldsymbol\lambda}) +\sigma(\boldsymbol\rho) -\tau({\boldsymbol\rho})\not=0$. Otherwise, we have
$\tau^{-1}\sigma({\boldsymbol\lambda}+\boldsymbol\rho)=\boldsymbol\rho$. Since both $\boldsymbol\lambda+\boldsymbol\rho$
and $\boldsymbol\rho$ lie in the dominant Weyl chamber, we must have $\tau^{-1}\sigma=1$. But then $\boldsymbol\lambda+\boldsymbol\rho=\boldsymbol\rho$ which contradicts to $\blambda\not=0$. So we have
\begin{align*}
\widetilde S^{\pm}({\boldsymbol\lambda})
&= \sum_{\sigma, \tau\in W} \mathrm{sgn}(\sigma\tau) \cdot 
\widehat B^\pm  \big(\sigma({\boldsymbol\lambda}) +\sigma(\boldsymbol\rho) -\tau({\boldsymbol\rho})\big)\\
&\ll \sum_{\sigma, \tau\in W} \frac{1}{{\mathbf N}\big(\sigma({\boldsymbol\lambda}) 
+\sigma(\boldsymbol\rho) -\tau({\boldsymbol\rho})\big)}\\
&\ll_{\rho, W} \sum_{\sigma} \frac{1}{{\mathbf N}(\sigma({\boldsymbol\lambda}))},
\end{align*} 
where the first inequality follows from (\ref{property2}), and the second inequality follows from Lemma \ref{lemma:technical1}. 
This proves (iii). 
For ${\boldsymbol\lambda}=0$, the calculation (\ref{evaluateS}) shows that 
$$\widetilde S^{\pm}(\mathbf 0)= \int_T B^\pm \overline{A_{\boldsymbol\rho}}  A_{\boldsymbol\rho}\, \mathrm{d}\mu_T.$$
We have 
\begin{align*}
\Big|\widetilde S^{\pm}(\mathbf 0)-\int_D  \frac{1}{|W|} 
&\prod_{{\boldsymbol\alpha}\in R} (1-e({\boldsymbol\alpha}))\, \mathrm{d}\mu_{T/W}\Big|\\
=& \Big| \int_T B^\pm \overline{A_{\boldsymbol\rho}}  A_{\boldsymbol\rho}\, \mathrm{d}\mu_T-\int_\Omega 
\overline{A_{\boldsymbol\rho}}  A_{\boldsymbol\rho} \, \mathrm{d}\mu_{T}\Big|\\
=&\Big| \int_T (B^\pm-\chi_\Omega) \overline{A_{\boldsymbol\rho}}  A_{\boldsymbol\rho}\, \mathrm{d}\mu_T\Big|\\
\le& \Vert  \overline{A_{\boldsymbol\rho}}  A_{\boldsymbol\rho}\Vert_\infty \int_T\vert B^\pm-\chi_\Omega\vert\, \mathrm{d}\mu_T.
\end{align*}
By (\ref{property2}), the last expression is 
$$ \Vert  \overline{A_{\boldsymbol\rho}}  A_{\boldsymbol\rho}\Vert_\infty \int_T (B^+-\chi_\Omega)\mathrm{d}\mu_T
=\Vert  \overline{A_{\boldsymbol\rho}}  A_{\boldsymbol\rho}
\Vert_\infty (\widehat B^+(\mathbf 0)-\widehat\chi_\Omega(\mathbf 0))=O\Big(\frac{1}{M}\Big)$$
in the $``+"$ case, and 
$$ \Vert  \overline{A_{\boldsymbol\rho}}  A_{\boldsymbol\rho}\Vert_\infty \int_T (\chi_\Omega-B^-)\mathrm{d}\mu_T
=\Vert  \overline{A_{\boldsymbol\rho}}  A_{\boldsymbol\rho}
\Vert_\infty (\widehat\chi_\Omega(\mathbf 0)-\widehat B^-(\mathbf 0))=O\Big(\frac{1}{M}\Big)$$
in the $``-"$ case.
\end{proof} 

\subsection{Proof of Theorem \ref{thm:effectiveWeyl}} By Lemma \ref{lemma:approx}, we have
\begin{align*}
&\ \ \ \ \frac{|\{1\leq i\leq N: x_i\in D\}|}{N}-\mu_{G_{\mathbf R}^\natural}(D)
=\frac{1}{N}\sum_{i=1}^N \chi_D(x_i)- \mu_{G_{\mathbf R}^\natural}(D)\\
&\leq \frac{1}{N}\sum_{i=1}^N S^+(x_i)-\mu_{G_{\mathbf R}^\natural}(D)\\
&= \frac{1}{N}\sum_{\substack{\blambda\in I^*\cap\mathfrak C-\{\mathbf 0\}\\ \Vert\blambda\Vert\leq M}}\widetilde S^+({\boldsymbol\lambda})
\sum_{i=1}^N \frac{A_{{\boldsymbol\lambda}+{\boldsymbol\rho}}}{A_{\boldsymbol\rho}}(x_i)+\widetilde S^+(\mathbf 0)-\mu_{G_{\mathbf R}^\natural}(D)\\
&\ll \frac{1}{N}\sum_{\substack{\blambda\in I^*\cap\mathfrak C-\{\mathbf 0\}\\ \Vert \blambda\Vert\leq M}} c(\blambda)
\Big|\sum_{i=1}^N 
\frac{A_{{\boldsymbol\lambda}+{\boldsymbol\rho}}}{A_{\boldsymbol\rho}}(x_i)\Big|+\frac{1}{M},
\end{align*}
where  
$$
c(\boldsymbol\lambda) =  \sum_{\sigma\in W} \frac{1}{{\mathbf N}(\sigma({\blambda}))}.
$$ 
This gives the desired upper bound. We get the lower bound using $S^-$. 

It remains to prove \eqref{scn}. For any $\lambda=\sum_{j=1}^n\lambda_j\mathbf e_j^*$, let $\Vert\blambda\Vert_\infty=\max_j|\lambda_j|$.
Taking $N:=M \sup\{\Vert \sigma(\blambda)\Vert_\infty: \, \sigma\in W,\;\Vert \blambda\Vert =1\}$, we have
\begin{align*}
 \sum_{\substack{\boldsymbol\lambda\in I^*\cap\mathfrak C-\{\mathbf 0\}\\ \Vert\blambda\Vert\le M}} c(\boldsymbol\lambda)\Vert \boldsymbol\lambda\Vert^r
&=
\sum_{\sigma\in W} 
\sum_{\substack{\boldsymbol\lambda\in I^*\cap\mathfrak C-\{\mathbf 0\}\\ \Vert \blambda\Vert\le M}}
\frac{\Vert \boldsymbol\lambda\Vert^r}{{\mathbf N}(\sigma({\blambda}))}
\\
&\le 
\sum_{\sigma\in W} 
\sum_{\substack{\boldsymbol\lambda\in I^*-\{\mathbf 0\}\\ \max_j|\lambda_j|\le N}}
\frac{\Vert \sigma(\boldsymbol\lambda)\Vert^r}{{\mathbf N}({\blambda})}.
\end{align*}
Note that 
\begin{align*}
   \Vert \sigma(\boldsymbol\lambda)\Vert=\Big\Vert \sum_j\lambda_j\sigma(\mathbf e_j^*)\Big\Vert
   \ll \max_j|\lambda_j|.
\end{align*}
We thus have
\begin{align*}
\sum_{\substack{\boldsymbol\lambda\in I^*-\{\mathbf 0\}\\ \max_j|\lambda_j|\le N}}
\frac{\Vert \sigma(\boldsymbol\lambda)\Vert^r}{{\mathbf N}({\blambda})}
&\ll
      \sum_{\substack{\lambda_1,\ldots, \lambda_n\in \mathbf Z\\
      |\lambda_1|\le \cdots \le| \lambda_n|\leq N}}\frac{(|\lambda_n|+1)^{r-1}}{(|\lambda_1|+1)\cdots (|\lambda_{n-1}|+1)}\\
      &\ll N^r (\log N)^{n-1}\ll M^r (\log M)^{n-1}.
\end{align*}
This completes the proof.  

\section{Proof of Theorem \ref{thm:effectiveDeligne}}

Throughout this section, $X$ is a smooth geometrically connected scheme over $\mathbf F_q$, 
and $\eta$ is the generic point of $X$.  
Any lisse $\overline{\mathbf Q}_\ell$-sheaf $\mathcal G$
on $X$ corresponds to a representation $\rho_{\mathcal G}: \pi_1(X, \bar \eta)\to \mathrm{GL}(\mathcal G_{\bar \eta})$. 
We say $\mathcal G$ 
\emph{has no geometric invariant} (resp. \emph{no geometric coinvariant}) if 
$$\mathcal G_{\bar\eta}^{\pi_1(X\otimes_{\mathbf F_q}\overline{\mathbf F}_q,\bar\eta)}=0\quad 
(\hbox{resp. }\mathcal G_{\bar\eta,\pi_1(X\otimes_{\mathbf F_q}\overline{\mathbf F}_q,\bar\eta)}=0).$$
Choose a finite extension $E$ of $\mathbf Q_\ell$ such that $\mathcal G$ comes from lisse a $E$-sheaf $\mathcal G_E$. Let $R$ be the 
discrete valuation ring of $E$, $\kappa$ the residue field, and $\mathcal G_R$ an $R$-sheaf such that $\mathcal G_E=\mathcal G_R\otimes_R E$. 
Choose a finite Galois \'etale covering $X' \to X$ so that $(\mathcal G_R\otimes_R \kappa)|_{X'}$ is a constant sheaf. 

\begin{proposition}\label{prop:Weil} 
Let $\mathcal G$ be a lisse $\overline{\mathbf Q}_\ell$-sheaf on $X$ punctually $\iota$-pure of weight $0$, and let 
$d=\mathrm{dim}\,X$. Suppose
$\mathcal G$ has no geometric invariant.  We have 
\begin{align*}
\Big|\sum_{x\in X(\mathbf F_{q^m})}\iota \mathrm{Tr}(F_x, \mathcal G_{\bar x})\Big| &\le
 C\, q^{\frac{(2d-1)m}{2}}\mathrm{dim}(\mathcal G_{\bar\eta}), 
\end{align*} where $C$ is a constant depending only on the Galois \'etale covering $X'\to X$ described above. 
\end{proposition}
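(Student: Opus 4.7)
The plan is to apply the Grothendieck--Lefschetz trace formula, kill the top compact cohomology using Deligne's geometric semisimplicity, invoke Deligne's weight theorem for the remaining degrees, and finally control the $\ell$-adic Betti numbers by the trivialising cover $X'\to X$. Starting from
\begin{align*}
\sum_{x\in X(\mathbf F_{q^m})}\iota\mathrm{Tr}(F_x,\mathcal G_{\bar x})
=\sum_{i=0}^{2d}(-1)^i\,\iota\mathrm{Tr}\bigl(F^m,\ H^i_c(X\otimes_{\mathbf F_q}\overline{\mathbf F}_q,\mathcal G)\bigr),
\end{align*}
the task reduces to bounding each term on the right, in both weight and dimension.

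Poincar\'e duality identifies $H^{2d}_c(X\otimes_{\mathbf F_q}\overline{\mathbf F}_q,\mathcal G)$ with the Tate twist $(-d)$ of the coinvariants $\mathcal G_{\bar\eta,\pi_1(X\otimes_{\mathbf F_q}\overline{\mathbf F}_q,\bar\eta)}$. Since $\mathcal G$ is punctually $\iota$-pure of weight $0$ and $X$ is smooth geometrically connected, Deligne's semisimplicity theorem \cite[1.3.9]{De80} ensures that the geometric monodromy representation on $\mathcal G_{\bar\eta}$ is semisimple; hence invariants and coinvariants coincide, and the hypothesis that $\mathcal G$ has no geometric invariant forces $H^{2d}_c=0$. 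For $0\le i\le 2d-1$, Deligne's main theorem \cite[3.3.1]{De80} asserts that $H^i_c(X\otimes_{\mathbf F_q}\overline{\mathbf F}_q,\mathcal G)$ is mixed of weight $\le i\le 2d-1$, so every eigenvalue $\lambda$ of $F^m$ satisfies $|\iota\lambda|\le q^{(2d-1)m/2}$.

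It remains to bound $\dim_{\overline{\mathbf Q}_\ell}H^i_c(X_{\overline{\mathbf F}_q},\mathcal G)\ll\dim\mathcal G_{\bar\eta}$ with implied constant depending only on $X'\to X$. Let $f:X'\to X$ denote the finite \'etale Galois cover trivialising $\mathcal G_R\otimes_R\kappa$. Since $f$ is finite \'etale, $f_!=f_*$, and the normalised trace splits the adjunction $\mathcal G\to f_*f^*\mathcal G$, so $H^i_c(X_{\overline{\mathbf F}_q},\mathcal G)$ is a direct summand of $H^i_c(X'_{\overline{\mathbf F}_q},f^*\mathcal G)$. Working with the integral model $\mathcal G_R$ and the short exact sequence $0\to\mathcal G_R\xrightarrow{\pi}\mathcal G_R\to\mathcal G_R/\pi\to 0$, the associated long exact sequence yields
\begin{align*}
\dim_E H^i_c(X'_{\overline{\mathbf F}_q},f^*\mathcal G_E)\le \dim_\kappa H^i_c(X'_{\overline{\mathbf F}_q},f^*(\mathcal G_R/\pi)).
\end{align*}
Since $f^*(\mathcal G_R/\pi)$ is the constant sheaf of rank $\dim\mathcal G_{\bar\eta}$ on $X'$, the right-hand side equals $\dim\mathcal G_{\bar\eta}\cdot\dim_\kappa H^i_c(X'_{\overline{\mathbf F}_q},\kappa)$, a quantity depending only on $X'\to X$. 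Summing these estimates over $0\le i\le 2d-1$ and combining with the weight bound yields the proposition with $C=\sum_i\dim_\kappa H^i_c(X'_{\overline{\mathbf F}_q},\kappa)$. I expect the subtlest point to be the vanishing of $H^{2d}_c$: the hypothesis concerns \emph{invariants}, whereas Poincar\'e duality naturally produces \emph{coinvariants}, and Deligne's geometric semisimplicity of pure sheaves is genuinely required to bridge them.
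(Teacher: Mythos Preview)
Your proof is correct and follows essentially the same route as the paper: Grothendieck--Lefschetz trace formula, vanishing of $H^{2d}_c$ via Poincar\'e duality together with Deligne's geometric semisimplicity of pure sheaves, Deligne's weight bound on the remaining degrees, and finally a Betti-number estimate linear in $\dim\mathcal G_{\bar\eta}$ with constant governed by the trivialising cover. The only difference is cosmetic: for the last step the paper simply cites \cite[Lemma~1.8]{FW04}, whereas you have written out what is essentially the proof of that lemma (transfer to $X'$ via the trace splitting, then reduce mod $\pi$ and compare with the constant sheaf on $X'$).
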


\begin{proof} Let $\mathcal G^\vee=\mathcal Hom(\mathcal G,\overline{\mathbf Q}_\ell)$ be the dual sheaf of $\mathcal G$. 
By the Poincar\'e duality theorem, we have
\begin{eqnarray*}
&&H^{2d}_c(X\otimes_{\mathbf F_q}\overline{\mathbf F}_q, \mathcal G)\cong
\mathrm{Hom}\Big(H^0 (X\otimes_{\mathbf F_q}\overline{\mathbf F}_q,\mathcal G^\vee),  
\overline{\mathbf Q}_\ell(-d)\Big)\\
&\cong& \mathrm{Hom}\Big(\mathcal G_{\bar\eta}^{\vee, \pi_1(X\otimes_{\mathbf F_q}\overline{\mathbf F}_q,\bar\eta)},\overline{\mathbf Q}_\ell(-d)\Big)
\cong \mathcal G_{\bar\eta,\pi_1(X\otimes_{\mathbf F_q}\overline{\mathbf F}_q,\bar\eta)}(-d).
\end{eqnarray*}
Since $\mathcal G$ is punctually $\iota$-pure of weight $0$, it is geometrically semisimple by \cite[3.4.1(iii)]{De80}. Since
$\mathcal G$ has no geometric invariant, it has no geometric coinvariant and hence
$$H^{2d}_c(X\otimes_{\mathbf F_q}\overline{\mathbf F}_q, \mathcal G)=0.$$ 
Combined with the Grothendieck trace formula \cite[Rapport 3.2]{De77}, we get
\begin{eqnarray*}
\sum_{x\in X(\mathbf F_{q^m})} \mathrm{Tr}(F_x, \mathcal G_{\bar x})=\sum_{i=0}^{2d-1}
\mathrm{Tr}\Big(F^m, H^i_c(X\otimes_{\mathbf F_q}\overline{\mathbf F}_q, \mathcal G)\Big),
\end{eqnarray*} 
where $F$ is the geometric Frobenius correspondence. 
By Deligne's theorem \cite[3.3.1]{De80}, 
all eigenvalues of $F$ on $H^i_c(X\otimes_{\mathbf F_q}\overline{\mathbf F}_q,\mathcal G)$ have absolute values $\leq q^{\frac{i}{2}}$
via $\iota$. By \cite[Lemma 1.8]{FW04}, there exists a constant $c$ depending only on the Galois \'etale covering $X'\to X$ such that 
$$\mathrm{dim}\, H^i_c(X\otimes_{\mathbf F_q}\overline{\mathbf F}_q, \mathcal G)\leq c\,\mathrm{dim}(\mathcal G_{\bar\eta}).$$
So we have 
\begin{eqnarray*}
\Big|\sum_{x\in X(\mathbf F_{q^m})}\iota \mathrm{Tr}(F_x, \mathcal G_{\bar x})\Big|& \le&\sum_{i=0}^{2d-1} q^{\frac{i}{2}}\,\mathrm{dim}\, 
 H^i_c(X\otimes_{\mathbf F_q}\overline{\mathbf F}_q, \mathcal G)\\
&\leq&2cd q^{\frac{(2d-1)m}{2}} \mathrm{dim}(\mathcal G_{\bar\eta}).
\end{eqnarray*}
\end{proof}

\begin{remark} Suppose $X$ is a smooth geometrically connected algebraic curve. 
By \cite[1.4.1]{De80}, we have
\begin{align*}
H^0_c(X\otimes_{\mathbf F_q}\overline{\mathbf F}_q, \mathcal G)&\cong 
\begin{cases}
\mathcal G_{\bar\eta}^{\pi_1(X\otimes_{\mathbf F_q}\overline{\mathbf F}_q,\bar\eta)} &\text{if } X \text{ is proper},\\
0&\text{otherwise}.
\end{cases}
\end{align*}
As $\mathcal G$ has no geometric invariant, we have $H^0_c(X\otimes_{\mathbf F_q}\overline{\mathbf F}_q, \mathcal G)=0$ and 
hence $H^1_c(X\otimes_{\mathbf F_q}\overline{\mathbf F}_q, \mathcal G)$ is the only cohomology group which may not vanish. 
By the Grothendieck--Ogg--Shafarevich formula \cite[X 7.12]{Gr77}, we have
\begin{align*}
\mathrm{dim}\, H^1_c(X\otimes_{\mathbf F_q}\overline{\mathbf F}_q,\mathcal G)
&= (2g-2+N)\mathrm{dim}(\mathcal G_{\bar\eta})+ \sum_{x\in \overline X-X}\mathrm{deg}(x)\mathrm{sw}_x(\mathcal G).
\end{align*} So we have
\begin{align*}
\Big|\sum_{x\in X(\mathbf F_{q^m})}\iota \mathrm{Tr}(F_x, \mathcal G_{\bar x})\Big| &\le
  q^{\frac{m}{2}}\Big((2g-2+N)\mathrm{dim}(\mathcal G_{\bar\eta})+ \sum_{x\in \overline X-X}\mathrm{deg}(x)\mathrm{sw}_x(\mathcal G)\Big). 
\end{align*}
This is more explicit than the estimate in Proposition \ref{prop:Weil}.
\end{remark}

\subsection{Proof of Theorem \ref{thm:effectiveDeligne}}
For any nonzero $\boldsymbol\lambda\in I^*\cap\mathfrak C-\{\mathbf 0\}$, let 
$$\Gamma_{\boldsymbol\lambda}: G\to \mathrm{GL}(r, \overline{\mathbf Q}_\ell)$$ 
be the irreducible representation of $G$ with the highest weight $\boldsymbol\lambda$, where $r=\mathrm{dim}(\Gamma_{\boldsymbol\lambda})$. 
Let $\mathcal G_{\boldsymbol\lambda}$ be the $\overline{\mathbf Q}_\ell$-sheaf corresponding to the representation
$$\pi_1(X, \bar\eta)\stackrel{\rho_{\mathcal F}}\to G\stackrel{\Gamma_{\boldsymbol\lambda}}\to \mathrm{GL}(r, \overline{\mathbf Q}_\ell).$$
Since $\Gamma_{\boldsymbol\lambda}$ is nontrivial and 
irreducible, the sheaf $\mathcal G_{\boldsymbol\lambda}$ has no geometric invariant.
Moreover, $\mathcal G_{\boldsymbol\lambda}$ is punctually $\iota$-pure of weight $0$ since $\rho_{\mathcal F}(F_x)^{\mathrm{ss}}$ 
lies in the maximal compact subgroup $G_{\mathbf R}$ of $G$ for every closed point $x$ in $X$. 
By the Weyl character formula, we have 
$$\iota \mathrm{Tr}(F_x, \mathcal G_{\boldsymbol\lambda,\bar x})= \frac{A_{{\boldsymbol\lambda}+{\boldsymbol\rho}}}
{A_{\boldsymbol\rho}}(\rho_{\mathcal F}(F_x)^{\mathrm{ss}}).$$
We claim that there is a common  finite extension $E$ of $\mathbf Q_\ell$ and a finite Galois \'etale covering $X' \to X$ 
such that for each $\boldsymbol\lambda\in I^*\cap\mathfrak C-\{\mathbf 0\}$, there exists an $R$-sheaf 
$\mathcal G_{\blambda,R}$ such that 
$\mathcal G_{\boldsymbol\lambda}$ comes from the $E$-sheaf $\mathcal G_{\boldsymbol\lambda,R}\otimes_RE$
and $\mathcal G_{\boldsymbol\lambda,R}\otimes_R\kappa|_{X'}$ is a constant sheaf. Indeed, we can first choose such a common 
pair $(E, X'\to X)$ for a finite family of weights
$\blambda_1, \ldots, \blambda_m$ which form a family of generators for the monoid $I^*\cap\mathfrak C$. Then the pair 
$(E, X'\to X)$ works for all $\blambda\in I^*\cap\mathfrak C-\{\mathbf 0\}$ since $\Gamma_{\blambda}$ can be realized 
as a sub-representation of a representation constructed by taking tensor products of representations from the family 
$\Gamma_{\blambda_1},\ldots, \Gamma_{\blambda_m}$.
Applying Proposition \ref{prop:Weil} to the sheaf $\mathcal G_{\boldsymbol\lambda}$, we get
\begin{align}\label{eqn1}
\Big|\sum_{x\in X(\mathbf F_{q^m})} \frac{A_{{\boldsymbol\lambda}+{\boldsymbol\rho}}}
{A_{\boldsymbol\rho}}(\rho_{\mathcal F}(F_x)^{\mathrm{ss}})\Big|
&=\Big|\sum_{x\in X(\mathbf F_{q^m})} \iota \mathrm{Tr}(F_x, \mathcal G_{\boldsymbol\lambda,\bar x})\Big|\\
&\ll \mathrm{dim}(\Gamma_{\boldsymbol\lambda})q^{\frac{(2d-1)m}{2}}, \nonumber
\end{align}
By the Weyl dimension formula \cite[VI 1.7 (iv)]{BtD85}, we have
$$\mathrm{dim}(\Gamma_{\boldsymbol\lambda})=\prod_{{\boldsymbol\alpha}\in R^+}\frac{({\boldsymbol\lambda}+{\boldsymbol\rho})(H_{\boldsymbol\alpha})}{{\boldsymbol\rho}(H_{\boldsymbol\alpha})}.$$
It follows that 
\begin{eqnarray}\label{eqn2}
\mathrm{dim}(\Gamma_{\boldsymbol\lambda})\ll \Vert \blambda\Vert ^{|R^+|}
\end{eqnarray}
for any nonzero $\blambda$.
Combining (\ref{eqn1}) and (\ref{eqn2}) with  Theorem \ref{thm:effectiveWeyl}, for any positive integer $M$, we 
have
\begin{align*}
&\frac{|\{x\in X(\mathbf F_{q^m}):\rho_{\mathcal F}(F_x)^{\mathrm{ss}}\in D\}|}{|X(\mathbf F_{q^m})|}-
\mu_{G_{\mathbf R}^\natural}(D)\\
\ll&  \frac{1}{|X(\mathbf F_{q^m})|}\sum_{\substack{\boldsymbol\lambda\in I^*\cap\mathfrak C-\{\mathbf 0\}\\ \Vert \blambda\Vert\leq M}}
c(\blambda)
\Big|\sum_{x\in X(\mathbf F_{q^m})} \frac{A_{{\boldsymbol\lambda}+{\boldsymbol\rho}}}
{A_{\boldsymbol\rho}}(\rho_{\mathcal F}(F_x)^{\mathrm{ss}})\Big| +\frac{1}{M}\\
\ll & \frac{1}{q^{dm}}\sum_{\substack{\boldsymbol\lambda\in I^*\cap\mathfrak C-\{\mathbf 0\} \\ \Vert \blambda\Vert\leq M}}
c(\blambda)\Vert \blambda\Vert^{|R^+|} q^{\frac{(2d-1)m}{2}}+\frac{1}{M}\\
\ll &q^{-\frac{m}{2}} M^{|R^+|} (\log M)^{n-1}  +\frac{1}{M}.
\end{align*}
Taking $M$ to be the integer closest to $q^{\frac{m}{2(|R^+|+1)}}(\log q^m)^{-\frac{n-1}{|R^+|+1}}$. Then 
$\log M\ll \log q^m$, and 
we have 
\begin{align*}
&q^{-\frac{m}{2}} M^{|R^+|} (\log M)^{n-1}  +\frac{1}{M}\\
\ll& q^{-\frac{m}{2}} M^{|R^+|} (\log q^m)^{n-1}  +\frac{1}{M}\\
\ll& q^{-\frac{m}{2}} q^{\frac{m{|R^+|}}{2(|R^+|+1)}}(\log q^m)^{-\frac{(n-1){|R^+|}}{|R^+|+1}} (\log q^m)^{n-1}  +
q^{-\frac{m}{2(|R^+|+1)}}(\log q^m)^{\frac{n-1}{|R^+|+1}}\\
=&2q^{-\frac{m}{2(|R^+|+1)}}(\log q^m)^{\frac{n-1}{|R^+|+1}}
\end{align*}
So we have 
\begin{eqnarray*}
\frac{|\{x\in X(\mathbf F_{q^m}):\rho_{\mathcal F}(F_x)^{\mathrm{ss}}\in D\}|}{|X(\mathbf F_{q^m})|}
=\mu_{G_{\mathbf R}^\natural}(D)
+O\Big(q^{-\frac{m}{2(|R^+|+1)}}(\log q^m)^{\frac{n-1}{|R^+|+1}}\Big).
\end{eqnarray*}

\section{Joint distribution}

Denote data associated to $G'$ by symbols with a superscript  
$'$. For example, for each 
$\boldsymbol \lambda'\in I'^*\cap \mathfrak C'$, let $\lambda'_j$ be the coefficients in the expression of 
$\boldsymbol \lambda'$
as a linear combination of the basis $\{\mathbf e_1^*, \ldots, \mathbf e^*_{n'}\}$ of $\mathfrak h'^*_{\mathbf R}$
dual to the basis $\{\mathbf e_1, \ldots, \mathbf e_n\}$ of $\mathfrak h'_{\mathbf R}$ in the definition of small boxes for $G'^\natural$.
Let $\Vert\cdot\Vert'$ be the chosen norm on $\mathfrak h'^*_{\mathbf R}$.

\begin{proposition}\label{thm:jointeffectiveWeyl} 
For any sequences $x_1,\ldots, x_N \in G_{\mathbf R}^\natural$
and $x'_1,\ldots, x'_N\in G'^\natural_{\mathbf R}$, any small
boxes $D$ in $G_{\mathbf R}^\natural$ and $D'$ in $G'^\natural_{\mathbf R}$,
and any positive integer $M$, we have 
\begin{align*}
&\frac{|\{1\leq i\leq N:x_i\in D,\, x'_i\in D'\}|}{N}-\mu_{G_{\mathbf R}^\natural}(D)\mu_{G'^\natural_{\mathbf R}}(D')\\
&\ll 
\frac{1}{N}\sum_{\substack{\boldsymbol\lambda\in I^*\cap\mathfrak C,\,
\boldsymbol\lambda'\in I'^*\cap\mathfrak C'\\ (\boldsymbol\lambda,\boldsymbol\lambda')
\not=(\mathbf{0},\mathbf{0})\\ \Vert\blambda\Vert,\Vert\blambda'\Vert'\leq M}}c(\blambda,\blambda')\Big
|\sum_{i=1}^N \frac{A_{{\boldsymbol\lambda}+{\boldsymbol\rho}}}{A_{\boldsymbol\rho}}(x_i)
\frac{A_{{\boldsymbol\lambda'}+{\boldsymbol\rho'}}}{A_{\boldsymbol\rho'}}(x'_i)\Big| +\frac{1}{M},
\end{align*}
where 
$$c(\blambda, \blambda')=\sum_{\sigma\in W, \,\sigma'\in W'}\frac{1}{{\mathbf N}(\lambda){\mathbf N}(\lambda')}.$$
For any real numbers $r$ and $r'$, we have
\begin{eqnarray}\label{joint}
  \quad \sum_{\substack{\boldsymbol\lambda\in I^*\cap\mathfrak C,\,
\boldsymbol\lambda'\in I'^*\cap\mathfrak C'\\ (\boldsymbol\lambda,\boldsymbol\lambda')
\not=(\mathbf{0},\mathbf{0})\\ \Vert\blambda\Vert,\Vert\blambda'\Vert'\leq M}} c(\boldsymbol\lambda,\blambda') 
    (\Vert \boldsymbol\lambda\Vert+1)^r(\Vert \boldsymbol\lambda'\Vert'+1)^{r'} \ll M^{r+r'}(\log M)^{n+n'-2}
\end{eqnarray}
The implied constants depend only on $G$ and $G'$.
\end{proposition}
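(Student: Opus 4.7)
The plan is to run the argument of Theorem \ref{thm:effectiveWeyl} in parallel on the product torus $T \times T'$, which I would identify with $\mathbf{T}^{n+n'}$ via the combined basis $\{\mathbf{e}_1, \ldots, \mathbf{e}_n, \mathbf{e}'_1, \ldots, \mathbf{e}'_{n'}\}$ of $\mathfrak{h}_\mathbf{R} \oplus \mathfrak{h}'_\mathbf{R}$. Under this identification, the pullback of the product small box $D \times D'$ from $(T/W) \times (T'/W')$ is a box $\Omega \times \Omega' \subset \mathbf{T}^{n+n'}$ with all side lengths $< 1$, and by the small-box condition in each factor the $(W \times W')$-translates of $\Omega \times \Omega'$ are pairwise disjoint.

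Choosing $L = \lfloor M/c^\ast\rfloor$ where $c^\ast$ is a common constant majorizing those provided by Lemma \ref{lemma:technical2} for $W$ and for $W'$, I would apply Proposition \ref{thCGT} directly on $\mathbf{T}^{n+n'}$ to the box $\Omega \times \Omega'$. This yields trigonometric polynomials $B^\pm$ of degree $\leq L$ in each coordinate with $B^- \leq \chi_{\Omega \times \Omega'} \leq B^+$ and Fourier coefficient bounds $\widehat B^\pm(\mathbf{0},\mathbf{0}) = \mu_T(\Omega)\mu_{T'}(\Omega') + O(1/M)$, $\widehat B^\pm(\mathbf{k},\mathbf{k}') = O(1/(\mathbf{N}(\mathbf{k})\mathbf{N}(\mathbf{k}')))$ otherwise. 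Symmetrizing by $S^\pm := \sum_{(\sigma, \sigma') \in W \times W'} (\sigma, \sigma')(B^\pm)$ produces $(W \times W')$-invariant functions on $T \times T'$ which descend to $(T/W)\times (T'/W') \cong G^\natural_\mathbf{R} \times G'^\natural_\mathbf{R}$ and sandwich $\chi_{D \times D'}$.

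Next I would expand $S^\pm$ in the orthonormal basis of $L^2((T/W)\times (T'/W'))$ (with respect to the joint weighted measure arising from Proposition \ref{generalST} on each factor) given by outer tensor products $\frac{A_{\boldsymbol\lambda+\boldsymbol\rho}}{A_{\boldsymbol\rho}} \otimes \frac{A'_{\boldsymbol\lambda'+\boldsymbol\rho'}}{A'_{\boldsymbol\rho'}}$ for $(\boldsymbol\lambda, \boldsymbol\lambda') \in (I^* \cap \mathfrak{C}) \times (I'^*\cap \mathfrak{C}')$; this basis follows from Peter--Weyl applied to $G_\mathbf{R} \times G'_\mathbf{R}$ together with the Weyl character formula in each factor. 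Repeating the calculation \eqref{evaluateS} coordinate-wise gives
$$\widetilde S^\pm(\boldsymbol\lambda, \boldsymbol\lambda') = \sum_{\substack{\sigma,\tau\in W \\ \sigma',\tau'\in W'}} \mathrm{sgn}(\sigma\tau)\,\mathrm{sgn}(\sigma'\tau')\,\widehat B^\pm\bigl(\sigma(\boldsymbol\lambda)+\sigma(\boldsymbol\rho)-\tau(\boldsymbol\rho),\ \sigma'(\boldsymbol\lambda')+\sigma'(\boldsymbol\rho')-\tau'(\boldsymbol\rho')\bigr).$$
For $(\boldsymbol\lambda, \boldsymbol\lambda') \neq (\mathbf{0}, \mathbf{0})$ the dominant-Weyl-chamber uniqueness argument of Lemma \ref{lemma:approx} shows that the Fourier index on the right-hand side is never $(\mathbf{0}, \mathbf{0})$, since vanishing of either coordinate forces the corresponding $\boldsymbol\lambda$ or $\boldsymbol\lambda'$ to be zero. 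Combining the Fourier decay of $\widehat B^\pm$ with Lemma \ref{lemma:technical1} applied in each coordinate yields $\widetilde S^\pm(\boldsymbol\lambda, \boldsymbol\lambda') \ll c(\boldsymbol\lambda, \boldsymbol\lambda')$, while $\widetilde S^\pm(\mathbf{0}, \mathbf{0}) = \mu_{G^\natural_\mathbf{R}}(D)\mu_{G'^\natural_\mathbf{R}}(D') + O(1/M)$ follows from the comparison between $\int B^\pm |A_{\boldsymbol\rho} A'_{\boldsymbol\rho'}|^2$ and $\int \chi_{\Omega\times\Omega'}|A_{\boldsymbol\rho} A'_{\boldsymbol\rho'}|^2$.

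The discrepancy bound then follows by averaging the sandwich $S^- \leq \chi_{D \times D'} \leq S^+$ against $(x_i, x'_i)$, isolating the $(\mathbf{0},\mathbf{0})$ term as the main term with $O(1/M)$ error, and applying the triangle inequality to the remainder, exactly as in the one-dimensional case. For \eqref{joint}, the coefficient $c(\boldsymbol\lambda, \boldsymbol\lambda')$ factorizes as $c(\boldsymbol\lambda)c'(\boldsymbol\lambda')$, and splitting the sum over $(\boldsymbol\lambda, \boldsymbol\lambda') \neq (\mathbf{0}, \mathbf{0})$ into the three subregions $\{\boldsymbol\lambda=0,\boldsymbol\lambda'\neq 0\}$, $\{\boldsymbol\lambda\neq 0,\boldsymbol\lambda'=0\}$, and both nonzero, then applying \eqref{scn} in each factor yields the bound $M^{r+r'}(\log M)^{n+n'-2}$ (dominant contribution from the both-nonzero region). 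The main technical point I anticipate is verifying that the bound $\widetilde S^\pm \ll c(\boldsymbol\lambda, \boldsymbol\lambda')$ holds uniformly even when one of $\boldsymbol\lambda, \boldsymbol\lambda'$ vanishes: in that case the corresponding $\mathbf{N}$-factor degenerates to $\mathbf{N}(\sigma(\boldsymbol\rho) - \tau(\boldsymbol\rho))$, which after summing over $\sigma,\tau$ contributes only a constant --- consistent with $c(\mathbf{0}, \boldsymbol\lambda') = |W|c'(\boldsymbol\lambda')$ --- so no genuine obstruction arises, but the bookkeeping must be done carefully.
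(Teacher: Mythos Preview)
Your proposal is correct and essentially matches the paper's approach. The paper dispatches the discrepancy bound in one line as ``Theorem \ref{thm:effectiveWeyl} for the group $G\times G'$'' (which your explicit product-torus construction simply unpacks), and for \eqref{joint} the paper re-runs the lattice-sum estimate from the proof of \eqref{scn} directly on the product lattice, whereas you factor $c(\boldsymbol\lambda,\boldsymbol\lambda')=c(\boldsymbol\lambda)c'(\boldsymbol\lambda')$ and apply \eqref{scn} separately to each factor---a slightly cleaner route to the same saving of one logarithm over the naive product bound.
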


\begin{proof} The first part is just Theorem \ref{thm:effectiveWeyl} for the group $G\times G'$. We only need to verify 
(\ref{joint}) which is stronger than applying Theorem \ref{thm:effectiveWeyl} directly by saving a logarithm factor. 
This is due to the special feature of the 
action of the Weyl group $W\times W'$. 

Following the arguments in the proof of proof of Theorem \ref{thm:effectiveWeyl}, we have
\begin{align*}
&  \sum_{\substack{\boldsymbol\lambda\in I^*\cap\mathfrak C,\,
\boldsymbol\lambda'\in I'^*\cap\mathfrak C'\\ (\boldsymbol\lambda,\boldsymbol\lambda')
\not=(\mathbf{0},\mathbf{0})\\ \Vert\blambda\Vert,\Vert\blambda'\Vert'\leq M}} c(\boldsymbol\lambda,\blambda') 
   (\Vert \boldsymbol\lambda\Vert+1)^r(\Vert \boldsymbol\lambda'\Vert'+1)^{r'}\\
     &=
\sum_{\substack{\sigma\in W\\ \sigma'\in W'}} 
\sum_{\substack{\boldsymbol\lambda\in I^*\cap\mathfrak C,\,
\boldsymbol\lambda'\in I'^*\cap\mathfrak C'\\ (\boldsymbol\lambda,\boldsymbol\lambda')
\not=(\mathbf{0},\mathbf{0})\\ \Vert\blambda\Vert,\Vert\blambda'\Vert'\leq M}}
\frac{(\Vert \boldsymbol\lambda\Vert+1)^r(\Vert \boldsymbol\lambda'\Vert'+1)^{r'}}{{\mathbf N}(\sigma({\blambda}))
{\mathbf N}(\sigma'({\blambda'}))}
\\
&\ll 
\sum_{\substack{\sigma\in W\\ \sigma'\in W'}} 
\sum_{\substack{\boldsymbol\lambda\in I^*,\,
\boldsymbol\lambda'\in I'\\ (\boldsymbol\lambda,\boldsymbol\lambda')
\not=(\mathbf{0},\mathbf{0})\\ \max_{j,j'}\{|\lambda|_j, |\lambda'|_{j'}\}\leq N}}
\frac{(\Vert \sigma(\boldsymbol\lambda)\Vert+1)^r(\Vert \sigma'(\boldsymbol\lambda')\Vert+1)^{r'}}
{{\mathbf N}({\blambda}){\mathbf N}({\blambda'})},
\end{align*}
where $N= M \sup\{\Vert \sigma(\blambda)\Vert_\infty, \Vert \sigma'(\blambda')\Vert_\infty: \sigma\in W,
\sigma'\in W', \Vert \blambda\Vert= \Vert \blambda'\Vert' =1\}$. (Confer the proof Theorem \ref{thm:effectiveWeyl} for the notation.)
We have
\begin{align*}
   \Vert \sigma(\boldsymbol\lambda)\Vert+1
    \ll \max_{1\le j\le n} |\lambda_j|+1, \quad  \Vert \sigma'(\boldsymbol\lambda')\Vert'+1
    \ll \max_{1\le j\le n} |\lambda'_j|+1.
\end{align*}
We infer that  
\begin{align*}
&\sum_{\substack{\boldsymbol\lambda\in I^*,\,
\boldsymbol\lambda'\in I'\\ (\boldsymbol\lambda,\boldsymbol\lambda')
\not=(\mathbf{0},\mathbf{0})\\ \max_{j,j'}\{|\lambda|_j, |\lambda'|_{j'}\}\leq N}}
\frac{(\Vert \sigma(\boldsymbol\lambda)\Vert+1)^r(\Vert \sigma'(\boldsymbol\lambda')\Vert'+1)^{r'}}
{{\mathbf N}({\blambda}){\mathbf N}({\blambda'})}   \\
   &\ll
      \sum_{\substack{\lambda_1,\ldots, \lambda_n, \lambda'_1,\ldots, \lambda'_{n'}\in\mathbf Z\\
      |\lambda_1|\le \cdots \le| \lambda_n|\leq N\\
       |\lambda'_1|\le \cdots \le| \lambda'_{n'}|\leq N}}
       \frac{(|\lambda_n|+1)^{r-1}}{(|\lambda_1|+1)\cdots (|\lambda_{n-1}|+1)} 
       \frac{(|\lambda'_{n'}|+1)^{r'-1}}{(|\lambda’_1|+1)\cdots (|\lambda'_{n'-1}|+1)}\\
      &\ll N^{r+r'} (\log N)^{n+n'-2}\ll M^{r+r'} (\log M)^{n+n'-2}.
\end{align*}
This completes the proof. 
\end{proof}

\subsection{Proof of Theorem \ref{thm:jointdistribution}} 

For any $\boldsymbol\lambda'\in I'^*\cap\mathfrak C'$, let 
$\Gamma_{\boldsymbol\lambda'}$ 
be the irreducible representation of $G'$ with the highest weight $\boldsymbol\lambda'$, and 
let $\mathcal G_{\boldsymbol\lambda'}$ be the $\overline{\mathbf Q}_\ell$-sheaf on $X$ corresponding to the representation
$\Gamma_{\boldsymbol\lambda'}\circ\rho_{\mathcal F'}$. By our assumption, if $(\boldsymbol\lambda,\boldsymbol \lambda')\not=(0,0)$, 
the sheaf $\mathcal G_{\boldsymbol\lambda}\otimes \mathcal G_{\boldsymbol\lambda'}$ has no geometric invariant and 
is punctually $\iota$-pure of weight $0$. 
By the Weyl character formula, we have 
$$\iota \mathrm{Tr}\Big(F_x, (\mathcal G_{\boldsymbol\lambda}\otimes \mathcal G_{\boldsymbol\lambda'})_{\bar x}\Big)
= \frac{A_{\boldsymbol\lambda+\boldsymbol\rho}}
{A_{\boldsymbol\rho}}(\rho_{\mathcal F}(F_x)^{\mathrm{ss}}) \frac{A_{\boldsymbol\lambda'+\boldsymbol\rho'}}
{A_{\boldsymbol\rho'}}(\rho_{\mathcal F'}(F_x)^{\mathrm{ss}}).$$
Applying Proposition \ref{prop:Weil} to the sheaf $\mathcal G_{\boldsymbol\lambda}\otimes \mathcal G_{\boldsymbol\lambda'}$, we get
\begin{eqnarray*}\label{eqn(i)}
\Big|\sum_{x\in X(\mathbf F_{q^m})} \frac{A_{\boldsymbol\lambda+\boldsymbol\rho}}
{A_{\boldsymbol\rho}}(\rho_{\mathcal F}(F_x)^{\mathrm{ss}}) \frac{A_{\boldsymbol\lambda'+\boldsymbol\rho'}}
{A_{\boldsymbol\rho'}}(\rho_{\mathcal F'}(F_x)^{\mathrm{ss}})\Big|
\ll \mathrm{dim}(\Gamma_{\boldsymbol\lambda}) \mathrm{dim}(\Gamma_{\boldsymbol\lambda'})q^{\frac{(2d-1)m}{2}}\nonumber.
\end{eqnarray*}
Note that 
\begin{eqnarray*}\label{eqn(ii)}
\mathrm{dim}(\Gamma_{\boldsymbol\lambda})\mathrm{dim}(\Gamma_{\boldsymbol\lambda'})
\ll (\Vert \blambda\Vert+1)^{|R^+|} (\Vert \blambda'\Vert'+1)^{|R'^+|}.
\end{eqnarray*}
By Proposition \ref{thm:jointeffectiveWeyl}, for any positive integer $M$, we have  
\begin{align*}
&\frac{|\{x\in X(\mathbf F_{q^m}):\, 
\rho_{\mathcal F}(F_x)^{\mathrm{ss}}\in D,\, \rho_{\mathcal F'}(F_x)^{\mathrm ss}\in D'\}|}{|X(\mathbf F_{q^m})|}-\mu_{G_{\mathbf R}^\natural}(D)
\mu_{G'^\natural_{\mathbf R}}(D')\\
\ll&  \frac{1}{|X(\mathbf F_{q^m})|}\sum_{\substack{\boldsymbol\lambda\in I^*\cap\mathfrak C,\,
\boldsymbol\lambda'\in I'^*\cap\mathfrak C'\\
(\boldsymbol\lambda,\boldsymbol\lambda')\not=(\mathbf{0},\mathbf{0})\\
 \Vert\blambda\Vert, \Vert\blambda'\Vert'\leq M}}
c(\blambda,\blambda')
\Big|\sum_{x\in X(\mathbf F_{q^m})} \frac{A_{\boldsymbol\lambda+\boldsymbol\rho}}
{A_{\boldsymbol\rho}}(\rho_{\mathcal F}(F_x)^{\mathrm{ss}}) \frac{A_{\boldsymbol\lambda'+\boldsymbol\rho'}}
{A_{\boldsymbol\rho'}}(\rho_{\mathcal F'}(F_x)^{\mathrm{ss}})\Big| +\frac{1}{M}\\
\ll& \frac{1}{q^{dm}}\sum_{\substack{\boldsymbol\lambda\in I^*\cap\mathfrak C,\,
\boldsymbol\lambda'\in I'^*\cap\mathfrak C'\\
(\boldsymbol\lambda,\boldsymbol\lambda')\not=(\mathbf{0},\mathbf{0})\\
 \Vert \blambda\Vert,\Vert\blambda'\Vert'\leq M}} c(\blambda, \blambda')
(\Vert \blambda\Vert+1)^{|R^+|} (\Vert \blambda'\Vert'+1)^{|R'^+|}q^{\frac{(2d-1)m}{2}}+\frac{1}{M}\\
\ll& q^{-\frac{m}{2}}M^{|R^+|+|R'^+|}(\log M)^{n+n'-2}+\frac{1}{M}.
\end{align*}
Taking $M$ to be the integer closest to $q^{\frac{m}{2(|R^+|+|R'^+|+1)}}(\log q^m)^{-\frac{n+n'-2}{|R^+|+|R'^+|+1}}$.
Then 
$\log M\ll \log q^m$, and 
we have 
\begin{align*}
q^{-\frac{m}{2}} M^{|R^+|+|R'^+|} (\log M)^{n+n'-2}  +\frac{1}{M}
\ll& q^{-\frac{m}{2}} M^{|R^+|+|R'^+|} (\log q^m)^{n+n'-2}  +\frac{1}{M}\\
\ll& q^{-\frac{m}{2(|R^+|+|R'^+|+1)}}(\log q^m)^{\frac{n+n'-2}{|R^+|+|R'^+|+1}}
\end{align*}
So we have 
\begin{eqnarray*}
&&\frac{|\{x\in X(\mathbf F_{q^m}):\, 
\rho_{\mathcal F}(F_x)^{\mathrm{ss}}\in D,\, \rho_{\mathcal F'}(F_x)^{\mathrm ss}\in D'\}|}{|X(\mathbf F_{q^m})|}-\mu_{G_{\mathbf R}^\natural}(D)
\mu_{G'^\natural_{\mathbf R}}(D')\\
&=&
O\Big(q^{-\frac{m}{2(|R^+|+|R'^+|+1)}}(\log q^m)^{\frac{n+n'-2}{|R^+|+|R'^+|+1}}\Big).
\end{eqnarray*}

\bibliographystyle{plainnat}

\end{document}